\newtheorem{theorem}{Theorem}[section]
\newtheorem*{therem}{Theorem}
\newtheorem{proposition}[theorem]{Proposition}
\newtheorem{definition}[theorem]{Definition}
\newtheorem{lemma}[theorem]{Lemma}
\theoremstyle{remark}
\newtheorem{example}{Example}
\newtheorem{note}{{\bf Note}}
\newcommand{\spn}{\mathrm{span}}
\newcommand{\spam}{\mathop{\mathrm{span}}}
\newcommand{\nats}{\mathbb{N}}
\newcommand{\reals}{\mathbb{R}}
\newcommand{\RR}{\mathbb{R}}
\newcommand{\comps}{\mathbb{C}}
\newcommand{\M}{\mathbb{M}}
\newcommand{\calo}{\mathcal{O}}
\newcommand{\sphere}{\mathbb{S}^d}
\newcommand{\sph}{\mathbb{S}} 
\newcommand{\Exp}{\operatorname{Exp}}
\newcommand{\man}{\M}
\newcommand{\dif}{\mathrm{d}}
\newcommand{\vect}[1]{\mathbf{#1}}
\renewcommand{\b}{\mathbf{b}}
\newcommand{\bfA}{\mathbf{A}}
\newcommand{\bfy}{\mathbf{y}}
\newcommand{\inj}{\mathrm{r}_\M}
\renewcommand{\d}{\mathrm{dist}}
\newcommand{\J}{\mathcal{J}}
\renewcommand{\L}{\mathcal{L}}
\newcommand{\ns}[1]{\left|\!\left|\!\left| {#1}\right|\!\right|\!\right|}
\def\calc{{\mathcal C}}
\def\K{\mathrm{K}}
\def\coll{\K_{\Xi}}
\def\bfa{{\bf a}}
\def\bfb{{\bf b}}
\def\bfe{{\bf e}}
\def\bfy{{\bf y}}
\def\bfzero{{\bf 0}}
\def\achi{\widetilde{\chi}}
\numberwithin{equation}{section}
\numberwithin{equation}{section}
 \title{Better bases for kernel spaces}
 \author{E.~Fuselier\thanks{ Department of Mathematics, High Point University,
 High Point, NC 27262, USA.}, 
 T.~Hangelbroek\thanks{ Department of Mathematics, University of Hawaii, 
 Honolulu, HI 96822, USA.  Research supported by  by grant DMS-1047694 from the National
    Science Foundation.}, 
F. J.~Narcowich\thanks{ Department of Mathematics, Texas A\&M
    University, College Station, TX 77843, USA. Research
    supported by grant DMS-0807033 from the National
    Science Foundation.}, 
J. D.~Ward\thanks{ Department of Mathematics, Texas A\&M University,
    College Station, TX 77843, USA. Research supported by
    grant DMS-0807033 from the National Science
    Foundation.},
 G. B~Wright\thanks{ Department of Mathematics, Boise State University, Boise, ID 83725, USA.
    Research supported by grants DMS-0934581and DMS-0540779 from the National Science
    Foundation.}   
    }
\begin{document}
\maketitle
\begin{abstract}
 In this article we investigate the feasibility of constructing stable, local bases for computing with kernels. In 
 particular, we are interested in constructing families $\bfb = (b_{\xi})_{\xi\in\Xi}$ that function as bases for
 kernel spaces $S(k,\Xi) = \{ \sum_{\xi\in\Xi} a_{\xi} k(\cdot,\xi)\mid (a_{\xi})_{\xi\in\Xi} \in \reals^{\Xi} \}$ so
 that each basis function can be 
 obtained by very few kernels
$$b_{\zeta} =  \sum_{\xi\in\Xi} A_{\zeta,\xi} k(\cdot,\xi) \qquad A_{\zeta,\xi} = 0\text{ for all but a few } \xi.$$
This is reminiscent of the construction of the B-spline basis from the family of  truncated power functions. 

We demonstrate that for a large class of kernels (the Sobolev kernels as well as many kernels of polyharmonic and related type) such bases exist . In fact, the basis elements can be constructed using a combination of roughly $\mathcal{O}(\log N)^d$ kernels, where $d$ is the local dimension of the manifold and $N$ is  the dimension of 
the kernel space (i.e.  $N=\#\Xi$). Viewing this as a preprocessing step -- the construction
of the basis has computational cost $\mathcal{O}\bigl(N(\log N)^d\bigr)$.
Furthermore, we prove that the 
new basis is $L_p$ stable and satisfies polynomial decay estimates that are stationary with respect
to the density of $\Xi$.
\end{abstract}


\section{Introduction}\label{sec0}

The purpose of this article is to investigate \emph{robust} bases for
spaces associated with a positive definite or conditionally positive
definite kernel $k:\M\times\M \to \RR$, where $\M$ is a $C^\infty$
(closed) compact Riemannian manifold. The dimension of $\M$ is
$d$. The kernels that we discuss below belong to a wide class that
includes the thin-plate splines and similar kernels when $\M=\sph^d$ or
$SO(3)$.

The spaces associated with a kernel from this class are defined as
follows. Let $\Xi \subset \M$ be a finite set of points, called
centers, having cardinality $N=\# \Xi$. The centers are
\emph{scattered} in the sense that they do not need to belong to a
regular grid. In the positive definite case, the space $S(k,\Xi)$
associated with the kernel $k$ and the set $\Xi$ is just
$S(k,\Xi)=\spn\{k(\cdot,\Xi), \ \xi\in \Xi\}$; that is,
\[
S(k,\Xi) := \left\{\sum_{\xi\in \Xi} a_\xi k(\cdot,\xi),\
  a_\xi\in \RR \right\}.
\]
The conditionally positive definite case is similar; we will discuss it
it in Section \ref{interpolation_kernels} -- specifically in (\ref{space_def}). 
For these spaces, if $h:=
 \max_{\xi \in \Xi}
\text{dist}(x,\xi)$ is the mesh norm (fill distance), $q:=
\frac12 \min_{\xi\ne \eta} \text{dist}(\eta,\xi)$ is the separation radius and
$\rho_\Xi := 
h/q$ is the mesh ratio, then as long as $\rho_\Xi
\le \rho_0$, where $\rho_0$ is fixed, Lebesgue constants are uniformly
bounded and approximation rates for functions in Sobolev spaces
$W_p^m(\M)$ are $\calo (h^m)$, with the constants independent of other
properties of $\Xi$ \cite{HNW,HNW2}.

Two other remarkable properties of $S(k,\Xi)$ concern its Lagrange
basis, $\{\chi_\xi(\cdot)\}_{\xi\in \Xi}$. Recall that in
a Lagrange basis each basis function satisfies $\chi_\xi(\eta) =
\delta_{\xi,\eta}$ when $\eta \in \Xi$. What was shown in \cite{HNW,HNW2}
is that $\chi_\xi$ decays \emph{exponentially} fast away from $\xi$
for special kernels, and \emph{algebraically} fast for many
others.

Equally as important, as we shall prove below in Theorem \ref{main}, 
if we express the $\chi_\xi$'s in the standard basis,
\[
\chi_\xi = \sum_{\eta\in \Xi} A_{\xi,\eta}k(\cdot, \eta),
\]
where the coefficients $A_{\xi,\eta}$ are well-known to be the entries
of the inverse of the interpolation matrix, then $|A_{\xi,\eta}|$
decays as a function of $\text{dist}(\eta,\xi)$ at the same rate as
$|\chi_\xi(x)|$ decays in $\text{dist}(x,\xi)$ -- \emph{i.e.},
exponentially or algebraically, as the case may be. Prior to our work,
the only provable results concerning decay of these coefficients were
done by Fornberg \cite{Forn} in the case of $\RR$ and $\RR^2$ for
gridded data, using Fourier techniques that do not carry over to the
scattered case.

The difficulty with the Lagrange basis is that each $\chi_\xi$ is
computationally costly both to construct (as a linear combination of $k(\cdot,\xi)$, $\xi\in \Xi$) 
and to compute with. Are there better bases? Here is what we would desire in a basis 
$\{b_\xi\}_{\xi\in  \Xi}$ for $S(k,\Xi)$.

Each basis function should be highly localized and nearly scalable
with respect to the mesh norm $h$ of $\Xi$. By this we mean that each
basis element is of the form
\[
b_\xi  = \sum A_{\xi,\eta}k(\cdot,\eta)
\]
where the $\eta$'s come from small subset of the centers $\Xi$ and
satisfy the following requirements:
\begin{align*}
  \text{\phantom{i}i)}~~ &\#\{A_{\rho,\xi}\ne 0\} = \mathrm{c}(\# \Xi)\\
  \text{ii)}~~ &|b_\rho(x)| \le
  \sigma\left(\frac{\text{dist}(x,\xi)}h\right)
\end{align*}
where the cost $\mathrm{c}(N)$ is constant or slowly growing with $N = \#\Xi$
and the function
$\sigma(x)$ decays rapidly: 
at an exponential rate $\sigma(x) \le
Ce^{-\nu|x|}$ or 
at least at a fast polynomial rate $\sigma(x) \le
C(1+|x|)^{-J}$. The B-spline basis, constucted from the
family of truncated power functions, is a model kernel method and in
the $d=1$ case provides an ideal solution to the problem we consider.

The main results of this paper demonstrate that such kernel bases
exist and that each basis function can be computed in nearly fixed
time. Viz.,\ an individual basis function can be computed in
${\calo}((\log N)^d)$ time while the full basis can be computed in
${\calo}(N(\log N)^d)$ time with $N=\# \Xi = Ch^{-d}$. Moreover the
basis is $L^p$ stable. 

The main tool employed is Theorem \ref{main} which allows one to
bound the rate of decay of Lagrange coefficients in terms of the
corresponding decay rate of the Lagrange functions. In particular if
the Lagrange functions have exponential decay so too do their
corresponding coefficients. While, as we mentioned earlier, this fact
was previously known in the scaled lattice case, but no such estimates
have been available in the scattered case.

{\bf The sphere $\mathbb{S}^2$} As an example of our main results for the sphere $\mathbb{S}^2$
and the {\em restricted surface splines} of order $s+1$, given by
$k_{s+1}(x,\alpha) := (1- x\cdot \alpha)^{s}\log(1-x\cdot\alpha)$,
 for $s = 1,2,3,\dots$, 
 we have the following theorem, which is a corollary of Theorem \ref{better_theorem} in Section \ref{S:better_basis}.
\begin{therem}
For 
a sufficiently dense  set of centers $\Xi$,  
and for a sufficiently large constant $\tau$
there is basis $(b_{\xi})_{\xi\in\Xi}$ for the space $S(k,\Xi)$ satisfying the following. 
\begin{itemize}
\item Each basis element
$b_{\xi} = \sum_{\zeta \in\Xi} A_{\xi,\zeta} k(\cdot,\zeta)$ is  composed of at most
$M := \tau(\log N)^2$ kernels. I.e.
$$
 \mathrm{c}(\# \Xi)= M = \tau \left(\log N \right)^2.$$
\item
Each basis element exhibits polynomial decay: there exist  constant $C$ and $J$
for which
 $$|b_{\xi}(x)|\le C \left(1+\frac{\d(x,\xi)}{h}\right)^{-J}.$$
\item The rate of polynomial decay $J$ depends linearly on the constant of proportionality $\tau$ by
$$ J=\mathcal{O}( \sqrt{\tau}).$$
 \item The basis is $L_p$ stable: there are constants $0<c_1\le c_2<\infty$ depending on $\tau$ 
 so that for all sequences
$\bfa = (a_{\xi})_{\xi\in\Xi}\in \reals^{\Xi}$ the following holds:
$$c_1 q^{2/p} \|\bfa\|_{\ell_p(\Xi)}
\le 
\left\|\sum_{\xi\in\Xi} a_{\xi} b_{\xi}\right\|_{L_p( \mathbb{S}^2 )}
\le 
c_2 q^{2/p} \|\bfa\|_{\ell_p(\Xi)}.$$
\end{itemize}
\end{therem}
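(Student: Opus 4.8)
The theorem is billed as a corollary of the general construction (Theorem~\ref{better_theorem}), so the natural strategy is to assemble the four bullet points from two ingredients already in hand: the Lagrange-function decay estimates for restricted surface splines on $\mathbb{S}^2$, and the coefficient-decay transfer provided by Theorem~\ref{main}. I would first record that for the restricted surface spline $k_{s+1}$ the Lagrange functions $\chi_\xi$ decay exponentially in $\d(x,\xi)/h$, and that by Theorem~\ref{main} the coefficients $A_{\xi,\eta}$ of the inverse interpolation matrix inherit exponential decay in $\d(\eta,\xi)/h$. The idea is then to build each $b_\xi$ by \emph{truncating} the Lagrange function: keep only those kernels $k(\cdot,\eta)$ whose center $\eta$ lies within a geodesic ball $B(\xi,R)$ of radius $R = c\, h\log N$, and set the remaining coefficients to zero. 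Because $\Xi$ has mesh ratio bounded by $\rho_0$, a ball of radius $R$ contains $\calo\bigl((R/q)^2\bigr) = \calo\bigl((\log N)^2\bigr)$ centers, which yields the count $M = \tau(\log N)^2$ in the first bullet and thus the computational cost.

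**Decay and tuning the exponent.** For the second and third bullets I would estimate $b_\xi$ as the sum of the true (infinite-support) Lagrange-type function and a tail correction. The tail consists of the discarded terms $\sum_{\eta\notin B(\xi,R)} A_{\xi,\eta}k(\cdot,\eta)$; using the exponential bound $|A_{\xi,\eta}|\le C e^{-\nu\,\d(\eta,\xi)/h}$ together with pointwise bounds on $k(\cdot,\eta)$, the tail is controlled by $e^{-\nu R/h} = N^{-c\nu}$ after summing the geometric-type series over the lattice-like shells of $\Xi$. Choosing the truncation radius proportional to $\tau$ — i.e.\ $R \sim \sqrt{\tau}\,h\log N$ so that $M=\tau(\log N)^2$ — converts the exponential rate $\nu$ into an algebraic decay exponent $J = \calo(\sqrt{\tau})$ in $(1+\d(x,\xi)/h)^{-J}$, which is exactly the linear-in-$\sqrt{\tau}$ dependence claimed in the third bullet. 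The polynomial (rather than exponential) form of the decay in bullet two reflects that we only retain finitely many kernels, so the global decay saturates at the level dictated by the truncation radius.

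**$L_p$ stability.** The last bullet follows once the $b_\xi$ are shown to be a small perturbation of a known stable family. The Lagrange functions themselves satisfy two-sided $L_p$ bounds of the form $c_1 q^{2/p}\|\bfa\|_{\ell_p} \le \|\sum a_\xi \chi_\xi\|_{L_p} \le c_2 q^{2/p}\|\bfa\|_{\ell_p}$, essentially because they form a nearly-orthogonal, well-localized system whose Gram operator is bounded above and below on $\ell_p$; the factor $q^{2/p}$ is the standard volume normalization on a $2$-dimensional manifold with separation $q$. I would then write $b_\xi = \chi_\xi - t_\xi$ with $t_\xi$ the truncation tail, and estimate the synthesis operator $\bfa\mapsto \sum a_\xi t_\xi$ in $L_p$ by a Schur-type / Young's-inequality argument on $\ell_p$, showing its operator norm is $O(N^{-c\nu})\cdot q^{2/p}$, hence negligible compared to $c_1$. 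A Neumann-series / perturbation argument then transfers the two-sided bounds from $\{\chi_\xi\}$ to $\{b_\xi\}$, possibly shrinking $c_1$ and enlarging $c_2$ in a way that depends on $\tau$.

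**Main obstacle.** The step I expect to be delicate is making the $L_p$ stability uniform across \emph{all} $p\in[1,\infty]$ simultaneously, since the synthesis and analysis operators must be bounded on $\ell_p$ with constants independent of $N$; the clean way is to prove the endpoint bounds $p=1$ and $p=\infty$ via the coefficient- and function-decay estimates (which give row and column sum control on the relevant infinite matrices) and then interpolate. Keeping the perturbation estimate for the tail uniform in $p$ — so that the Neumann argument closes for every $p$ at once — is the crux, and it is precisely here that the exponential decay of the $A_{\xi,\eta}$ from Theorem~\ref{main}, rather than mere algebraic decay, is what guarantees the tail is summably small after truncation at radius $\sim h\log N$.
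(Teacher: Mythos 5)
Your proposal is correct and follows essentially the same route as the paper: truncate the global Lagrange functions at radius $\sim \sqrt{\tau}\,h\log N$ (so that the volume count gives $\tau(\log N)^2$ kernels and the coefficient decay from Theorem~\ref{main} makes the discarded tail of size $h^{K\nu-2m}$, yielding $J=K\nu-2m=\mathcal{O}(\sqrt{\tau})$), then transfer $L_p$ stability from the Lagrange basis by treating the truncation error as a small perturbation, proved at the endpoints $p=1,\infty$ and interpolated -- which is exactly the paper's argument in Section~\ref{S:better_basis} and the proof of Theorem~\ref{better_theorem}. The only cosmetic difference is that you phrase the perturbation bound via a Schur-test/Neumann-series argument, where the paper uses a direct pointwise/H\"older estimate on $\|\tilde{s}-s\|$ followed by interpolation of the finite-rank error operator.
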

The restricted surface splines on $\mathbb{S}^2$ are of special importance, largely because 
the sphere is the setting of many problems of scientific interest, but also because the kernels themselves
have a convenient, closed form representation and their approximation power that is well understood
and optimal in the sense that approximation rates are in line with smoothness assumptions
for the target functions (i.e., approximands in Sobolev classes $W_p^s$ or Besov classes $B_{p,\infty}^s$ 
are approximated by functions in $S(k,\Xi)$ with error decaying like $h^s$). 

We give some numerical examples for $\mathbb{S}^2$  with such kernels (and others) in Section \ref{S:Examples}.

{\bf Precondioners} Over the years practical implementation of kernel approximation has
progressed despite the ill-conditioning of kernel bases. This has
happened with the help of clever numerical techniques like multipole
methods \cite{Greengard, BPT, BL} and often with the help of preconditioners 
\cite{BGP, FGP,Kansa,Sloan}
 of which  \cite{BP, CR1, CR2}  are of special interest to us, because
these involve attempts to construct local bases.
Indeed, another offshoot of our results in that one can now estimate, a
priori, the number of coefficients needed to guarantee good
preconditioners. Many results already exist in the RBF literature
concerning preconditioners and ``better'' bases. For a good list of
references and further discussion, see \cite{Fasshauer}. Several of these papers use ``local
Lagrange'' functions in their efforts to efficiently construct
interpolants. The number of points chosen to localize the Lagrange
functions are ad hoc and seem to be based on experimental
evidence. For example, Faul and Powell, in \cite{FaulPowell},  devise an algorithm which converges
to a given RBF interpolant that is based on local Lagrange interpolants
using about thirty nearby centers. Beatson--Cherrie--Mouat, in \cite{BCM},
use fifty local centers (p.~260, Table~1) in their construction along
with a few ``far away'' points to control the growth of the local
interpolant at a distance from the center. In other work, Ling and Kansa \cite{Kansa}
and co-workers have studied approximate cardinal basis functions based
on solving least squares problems. Thus one goal of this paper is to
provide some theoretical groundwork that may yield future improvements
in preconditioner algorithms and better bases for kernel spaces.

{\bf Organization} We devote Section \ref{S:Background} to treating 
some pertinent results and definitions for Riemannian
manifolds.   
In Section \ref{S:LagrangeBasis} we consider the stable, local
bases constructed in \cite{HNW,HNSW,HNW2}, which have many desirable
properties but are computationally infeasible due to their cumbersome
construction---each basis function of this type requires $\#\Xi$
nonzero kernel coefficients in its construction and, moreover,
computing these requires ${\calo}((\#\Xi)^3)$ operations. An analysis
of these coefficients show that they drop off rapidly---this is
demonstrated in Section \ref{S:Coefficients}. The rapid decay of these
coefficients leads to the (theoretical) existence of efficiently
constructed bases, but sadly does not indicate the desired
construction -- this is treated in Section \ref{S:better_basis}. In Section \ref{S:Examples}, we give numerical evidence to bolster the
results of the previous sections, by giving results of  experiments that
show how rapidly the Lagrange basis and the coefficients decay. In
this section we give some examples of techniques that fail to deliver,
and provide some examples of families that seem to have the desired
properties which have not been validated theoretically.

\section{Geometric background}\label{S:Background}
Throughout this paper, $\M$ denotes a compact, complete $d$-dimensional Riemannian manifold.
The Riemannian metric for $\M$ is $g$, which defines
an inner product $g_p(\cdot,\cdot )=\langle\cdot,\cdot \rangle_{g,p}$
on each tangent space $T_pM$; the corresponding norm is
$|\cdot|_{g,p}$.

The Riemannian metric is employed to measure arc length of a curve $\gamma$ via $\int_a^b |\dot
\gamma|_{g,p}dt$. Geodesics are curves $\gamma : \RR \to \M$ that locally
minimize the arc length functional giving rise to a distance function 
$$\d(p,q)  = \min_{\substack{\gamma(0) = p\\\gamma(1) = q}}
\int_0^1 |\dot\gamma|_{g,p}dt.$$
We denote the ball in $\M$ centered at $x$ having radius $r$ by $B(x,r).$ 
Given a finite set $\Xi\subset\M$, we define its  \emph{mesh norm}  (or \emph{fill distance}) $h$ 
and the \emph{separation radius} $q$ to be:
\begin{equation} \label{minimal-separation}
 h:=\sup_{x\in \M} \d(x,\Xi)\qquad \text{and}\qquad  q:=\frac12 
\inf_{\xi,\zeta\in \Xi, \xi\ne \zeta} 
\d(\xi,\zeta).
\end{equation}
The mesh norm measures the density of $\Xi$ in $\M$, the separation radius determines 
the spacing of $\Xi$. The \emph{mesh ratio} $\rho:=h/q$ measures the uniformity of the 
distribution of $\Xi$ in $\M$.  
We say that the point set $\Xi$ is quasi-uniformly distributed, or simply that $\Xi$ is quasi-uniform if 
$\Xi$ belongs to a class of finite subsets with mesh ratio bounded by a constant $\rho_0$.

The metric $g$ also induces an invariant volume measure $d\mu$ on
$\M$. The local form of the measure is $d\mu(x) =
\sqrt{\det(g)}dx^1\cdots dx^d$, where $\det(g) =
\det(g_{ij})$. We indicate the measure of
subsets $\Omega\subset \M$ by $\mathrm{vol}(\Omega)$. The integral, and the $L_p$ 
spaces for $1\le p\le \infty$, are defined with respect to this measure.  The embeddings
$$C(\M) \subset L_p(\M)\ \text{for}\ 1\le p \le \infty 
\quad \text{and} \quad 
L_p(\M) \subset L_q(\M)\ \text{for}\ 1\le q\le p\le \infty$$ 
hold. In addition, $L_2$ is a Hilbert space equipped with the inner product 
$\langle\, \cdot\, ,\, \cdot \,\rangle\colon \ (f,g)\mapsto \langle f,g\rangle$

\paragraph{Sobolev spaces on subsets of $\M$}
Sobolev spaces on subsets of a Riemannian manifold can be defined in
an invariant way, using the covariant derivative (or connection) $\nabla$ (cf. \cite{Aub}) 
which maps tensor fields of 
rank $j$ to tensor fields of rank $j + 1$. The $k$th covariant
derivative of a function is a rank $k$ tensor field and is denoted $\nabla^k f$. 
For $k=1$,  the covariant derivative in local
coordinates is simply the usual expression for the ``gradient'' -- it 
can be written simply as $(\nabla f(x))_j = \frac{\partial f}{\partial x^j} f(x)$.
For $k=2$, the ``Hessian'' tensor involves 
Christoffel symbols $\Gamma_{ij}^{m}$ and can be expressed  as 
$(\nabla^2 f(x))_{i,j} = \frac{\partial^2f}{\partial x^i x^j} (x) - \sum_{m=1^d} 
\Gamma_{i,j}^{m}(x) \frac{\partial f}{\partial x^m}(x) $.
Higher order covariant derivatives have an analogous expression, using higher order
derivatives of the Christoffel symbols -- see \cite[Eqn. (3)]{HNW}.

\begin{definition}[{\cite[p.~32]{Aub}}]\label{Sob_Norm}
  Let $\Omega\subset \man$ be a measurable subset. We define the Sobolev 
  space $W_2^m(\Omega)$ to be all $f:\M \to \reals$ such that, for $0\le k\le m$, 
  $ | \nabla^k f|_{g,p} $ in $L_2(\Omega)$ with associated norm
%
%
\begin{equation}\label{def_spn} 
\|f\|_{m,\Omega}^2 :=\|f\|_{W_2^m(\Omega)}:= 
  \left( 
    \sum_{k=0}^m
    \int_{\Omega} 
    | \nabla^k f |_{g,p}^2
    \, \dif \mu(p)\right)^{1/2}, 
\end{equation}
coming from the Sobolev inner product
\begin{equation}\label{def_sn} 
\langle f, g\rangle_{m,\Omega}:=\langle f,g\rangle_{W_2^m(\Omega)}:= 
  \sum_{k=0}^m
  \int_{\Omega} 
    \left\langle
      \nabla^k f,  \nabla^k g
    \right\rangle_{g,p}
  \, \dif \mu(p).
\end{equation}
 When $\Omega=\man$, we may suppress
the domain: $\langle f,g\rangle_m = \langle f,g\rangle_{m,\M}$ and
$\|f\|_m=\| f\|_{m,\M}$.
\end{definition}
\paragraph{Metric equivalence} The exponential map allows us to
compare the Sobolev norms we've just introduced, to standard Euclidean
Sobolev norms as follows:
\begin{lemma}[{\cite[Lemma~3.2]{HNW}}]\label{Fran}
  For $m\in\nats$ and $0<r<\inj/3$, there are constants $0 < c_1 <c_2$
  so that for any measurable $\Omega \subset B_{r}$, for all $j\in
  \nats$, $j\le m$, and for any $p_0\in \M$, the equivalence
$$
c_1\| u\circ\Exp_{p_0}\|_{W_2^j(\Omega)}\le
\|u\|_{W_2^j(\Exp_{p_0}(\Omega))}\le c_2\|
u\circ\Exp_{p_0}\|_{W_2^j(\Omega)}
$$
holds for all $u:\mathrm{\Exp}_{p_0}(\Omega)\to \reals$. The constants
$c_1$ and $c_2$ depend on $r$ and $m$ but they are
\emph{independent} of $\Omega$ and $p_0$.
\end{lemma}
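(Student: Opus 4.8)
The plan is to pass to the geodesic normal coordinates supplied by $\Exp_{p_0}$ and reduce the asserted norm equivalence to a \emph{pointwise} comparison, at each point of $B_r$, between the intrinsic covariant quantities $|\nabla^k u|_{g}$ (for $k\le j$) and the ordinary Euclidean partial derivatives of the pullback $u\circ\Exp_{p_0}$, together with a two-sided comparison of the Riemannian density $\sqrt{\det g}$ with Lebesgue measure. Both sides of the claimed inequality are integrals over $\Omega$:
$$
\|u\|_{W_2^j(\Exp_{p_0}(\Omega))}^2=\sum_{k=0}^j\int_\Omega |\nabla^k u|_{g}^2\,\sqrt{\det g}\,\dif x,
\qquad
\|u\circ\Exp_{p_0}\|_{W_2^j(\Omega)}^2=\sum_{|\alpha|\le j}\int_\Omega |\partial^\alpha(u\circ\Exp_{p_0})|^2\,\dif x,
$$
so a pointwise two-sided bound between the two integrands, with constants independent of $x$, $\Omega$ and $p_0$, yields the result upon integration over the arbitrary measurable set $\Omega\subset B_r$.

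First I would record the algebraic relation between the two families of derivatives. In local coordinates the components of $\nabla^k u$ are universal polynomial expressions in the partial derivatives $\partial^\beta u$, $|\beta|\le k$, whose coefficients are built from the Christoffel symbols $\Gamma_{ij}^m$ and their derivatives up to order $k-1$; this is the higher-order covariant-derivative formula described after Definition \ref{Sob_Norm}. The correspondence is triangular in the order of differentiation, with leading term $\partial^\alpha u$, hence it inverts: each $\partial^\alpha u$ is expressible through the $\nabla^i u$, $i\le|\alpha|$, with coefficients of the same type. Consequently $\sum_{k\le j}|\nabla^k u|_{g}^2$ and $\sum_{|\alpha|\le j}|\partial^\alpha(u\circ\Exp_{p_0})|^2$ each dominate a fixed multiple of the other, the constants depending only on pointwise bounds for $g$, for $g^{-1}$ (needed to pass between the metric tensor norm $|\cdot|_g$ and the Euclidean norm of the tensor components), and for the $\Gamma_{ij}^m$ together with their derivatives through order $m-1$.

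The crux is that all of these bounds, as well as the two-sided bound on $\sqrt{\det g}$, can be taken \emph{uniform in $p_0$}, and this is where the compactness of $\M$ is essential. Since $\M$ is compact its injectivity radius is a positive continuous function of the base point, so the global injectivity radius $\inj=\min_p\mathrm{inj}(p)$ is positive; the hypothesis $0<r<\inj/3$ then guarantees that, for \emph{every} $p_0$, $\Exp_{p_0}$ is a diffeomorphism of the fixed Euclidean ball $B_r\subset T_{p_0}\M\cong\reals^d$ onto its image, so the pullback metric $g^{(p_0)}:=(\Exp_{p_0})^*g$ is a genuine Riemannian metric on $B_r$ with $g^{(p_0)}_{ij}(0)=\delta_{ij}$. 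By smooth dependence of the geodesic flow (equivalently, of Jacobi fields) on initial data, the assignment $p_0\mapsto g^{(p_0)}$ is continuous from $\M$ into $C^m(\overline{B_r})$; compactness of $\M$ makes its image a bounded subset of $C^m(\overline{B_r})$. This furnishes uniform $C^m$ bounds on $g^{(p_0)}_{ij}$ and $g^{(p_0)\,ij}$, and therefore on the Christoffel symbols and their derivatives through order $m-1$. Moreover, each $g^{(p_0)}$ is positive definite on the compact ball $\overline{B_r}$ (no conjugate points occur inside the injectivity radius), so the least and greatest eigenvalues of $g^{(p_0)}(x)$ are, by the same compactness, bounded away from $0$ and $\infty$ uniformly in $(p_0,x)\in\M\times\overline{B_r}$; this gives a uniform two-sided estimate $\lambda^{-1}\mathrm{Id}\le g^{(p_0)}\le\lambda\,\mathrm{Id}$ and a corresponding two-sided bound on $\sqrt{\det g^{(p_0)}}$ throughout $B_r$.

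Combining the two previous paragraphs gives, at every point of $\Omega$ and for each $j\le m$, a pointwise two-sided comparison of the integrands above with constants $c_1,c_2$ depending only on $r$, $m$ and the uniform geometric data of $\M$. Integrating over $\Omega$ delivers the stated equivalence, with constants manifestly independent of $\Omega$ (the comparison is pointwise on $B_r\supset\Omega$) and of $p_0$ (the geometric bounds are uniform). The main obstacle is precisely this last uniformity: one must verify that $p_0\mapsto(\Exp_{p_0})^*g$ is continuous in the $C^m(\overline{B_r})$ topology. This rests on the two ingredients isolated above, namely the lower bound on the injectivity radius, which lets all base points share the common coordinate ball $B_r$, and the smooth dependence of solutions of the geodesic equation on their initial conditions, which upgrades the pointwise smoothness of each $g^{(p_0)}$ to joint control and hence, via compactness, to the required uniform bounds.
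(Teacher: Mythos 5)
Your argument is correct and coincides with the proof of this result in the literature: the paper itself states the lemma without proof, importing it from \cite{HNW}, where Lemma~3.2 is established in just this way --- pull back through $\Exp_{p_0}$, use compactness of $\M$ (positive global injectivity radius plus smooth dependence of geodesics on initial data) to get uniform two-sided eigenvalue bounds and uniform $C^m$ control of the pulled-back metric, hence of $g^{\pm1}$, $\sqrt{\det g}$, and the Christoffel symbols on the fixed ball $B_r$, then compare the two integrands pointwise and integrate over the arbitrary measurable $\Omega$. Two cosmetic remarks only: the components of $\nabla^k u$ involve derivatives of the Christoffel symbols through order $k-2$ (not $k-1$), which only makes your hypotheses slightly stronger than needed, and the identification $T_{p_0}\M\cong\reals^d$ should be fixed by a choice of orthonormal frame, which is harmless because the Euclidean $W_2^j$ norms you compare against are (uniformly) invariant under orthogonal changes of coordinates.
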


\section{The Lagrange basis}\label{S:LagrangeBasis}
For a manifold $\M$, a  positive definite kernel $k:\M\times\M\to \reals$ and a set of centers $\Xi\subset\M$, 
we are concerned with the robustness of the Lagrange basis 
$(\chi_{\zeta})_{\xi\in\Xi}$ for $S(k,\Xi)$, where 
$\chi_{\zeta}(\xi)= \delta_{\xi,\zeta} $ for all $\xi \in \Xi$. 
The Lagrange basis plays a central role in most interpolation problems, and certainly
this is the case
for radial basis function and kernel interpolation. Decay of the Lagrange basis and
analytic consequences have notably been considered in \cite{Madych, buhmann_1990, Siva, Forn}.

\subsection{The kernels considered}
More recently, \cite{HNW,HNSW, HNW2},  develop a theory for fast decay and stability of the Lagrange basis associated with certain positive definite and conditionally positive definite kernels.
\begin{itemize}
\item  ``Sobolev kernels'' denoted by $\kappa_m$ were introduced in \cite{HNW} 
for any smooth, complete, compact and connected Riemannian manifold. These are the 
reproducing kernels for the Sobolev inner product\footnote{In fact, the inner product can be weighted as 
$ \sum_{k=0}^m
c_k
  \int_{\Omega} 
    \left\langle
      \nabla^k f,  \nabla^k g
    \right\rangle_{g,p}
  \, \dif \mu(p)$ with non-negative weights $c_k$ for which $c_0$ and $c_m$ are postive -- each
  such reweighting gives a different inner product and a different Sobolev spline.} for $W_2^m(\M)$ when $m>d/2$:
$$(u,v) \mapsto \langle u,v\rangle_{W_2^m(\Omega)} =   \sum_{k=0}^m
  \int_{\Omega} 
    \left\langle
      \nabla^k f,  \nabla^k g
    \right\rangle_{g,p}
\,   \dif \mu(p).$$ 
\item This was extended in \cite{HNW2} to treat a broader class of  kernels on certain manifolds
called kernels of polyharmonic
and related type (these are discussed in Section \ref{S:Estimating}).
\item Included in the class considered in \cite{HNW2} are 
{\em restricted surface splines} on $\sphere$ which are kernels of the form $k_m(x,\alpha)  = \phi(x\cdot \alpha)$ where
$$ \phi(t) = \begin{cases}(1-t)^{m-d/2} &\text{ for } d \text{ odd }\\ (1-t)^{m-d/2} \log(1-t) &\text{ for } d \text{ even}.\end{cases}$$
 These kernels are conditionally positive definite, meaning that interpolants are constructed
by adding an auxiliary function. (In this case, a low degree spherical harmonic.) See Section \ref{interpolation_kernels} below.

The expansion of the functions $\phi$ in terms of Gegenbauer polynomials by Baxter
and Hubbert, \cite{BaHu}, leads to Fourier (spherical harmonic) expansions of the kernels,
and from there to their characterization as Green's functions for elliptic differential operators. These
operators are of polyharmonic type -- they are of the form $Q(\Delta) = \prod_{j=1}^m (\Delta - r_j)$
for some real numbers $r_1, \dots, r_m$.
This, in turn, permits an understanding of the approximation power of the kernel, as investigate in 
\cite{MNPW, Hsphere}:
for functions having $L_p$ smoothness $s$ up to order $2m$ (namely, for target functions in 
smoothness spaces including $B_{p,q}^s(\sphere),W_p^s(\sphere),C^s(\sphere)$ with $s\le 2m$), 
 $$\d_p\bigl(f,S(k,\Xi)\bigr)
  = \mathcal{O}(h^s).$$
Here, the space $S(k,\Xi)$ is modified by addition of low degree spherical harmonic terms
$\Pi = \{Y_{\ell,m}\mid \ell\le \lfloor m-d/2\rfloor \}$ (this is described in Section \ref{space_def} below).

\item Surface splines on $SO(3)$ which are of the form $k(x,\alpha)  = \phi(\omega(\alpha^{-1}x) )$ 
with $\omega(x)$ the angle of rotation of $x$ (which is a left and right invariant metric on 
the group) and
$$ \phi(t) =  \bigl(\sin(t/2)\bigr)^{m-3/2}.$$
In \cite{HS}, an expansion of $\phi$ in  even Chebyshev polynomials of the second kind leads to
a Fourier (Wigner D-function) expansion of the kernel $k$. As in the spherical case,
this leads to its characterization as a Green's function for an operator of polyharmonic
type on $SO(3)$, and to a realization of its approximation power:
 again, for $f\in B_{p,q}^s(SO(3)),W_p^s(SO(3)),C^s(SO(3))$ with  $s\le 2m$ we have
 $$\d_p\bigl(f,S(k,\Xi)\bigr) = \mathcal{O}(h^s).$$ 
\end{itemize}

{\bf Restricted kernels} An alternative approach, taken in \cite{FuWr}, is to consider the manifold
$\M$ as embedded in an ambient Euclidean space $\reals^n$, and to use the restriction of 
a radial basis function -- a Euclidean (conditionally) positive definite kernel satisfying 
rotational symmetry (of which there are many prominent examples) --  as a (conditionally)
positive definite kernel on $\M$. In a sense, this is a completely different approach, in the sense
that such kernels are almost never fundamental solutions to differential operators, a key point 
of \cite{HNW2}. On the other hand, such kernels may be easily localized in the ambient space $\reals^n$,
which may lead to an effective way of localizing and preconditioning the restricted kernels. Although
the theory developed in Sections 3 and 4 does not address such kernels, we include a numerical
example in Section \ref{S:Examples}.

\subsection{Analytic properties of the Lagrange basis} The theory developed in \cite{HNW,HNSW,HNW2} addresses analytic properties of bases for 
$S(k,\Xi)$,  related to locality, stability of approximation and interpolation.
In particular the following are shown.

{\bf Locality}. The Lagrange basis is a local bases for $S(k,\Xi)$. That is, 
$$|\chi_{\xi}(x)| \le C \exp\left(-\nu \frac{\d(x,\xi)}{h}\right).$$

{\bf Stability of interpolation}. Interpolation is stable: the
Lebesgue constant is bounded (and more generally, is the $p$ norm of the
interpolant  is controlled by the $\ell_p$ norm of the data).

{\bf $L_p$ conditioning}.
There are constants depending only on $c_1,c_2$ such that 
$c_1 \|a\|_{\ell_p}\le\|\sum_{j=1}^N a_{\xi} \chi_{\xi}\|_{L_p} \le c_2 \|a\|_{\ell_p}$, with $c_1,c_2$
depending only on $m$, $\M$ and the mesh ratio $\rho$.
In particular, they are independent of $\#\Xi = \dim(S(k,\Xi))$, and, after a suitable normalization, independent of
$p$.

{\bf Marcinkiewicz-Zygmund property}. The space $S(k,\Xi)$
possess a Marcinkiewicz-Zygmund property relating samples to the size
of the function.  For $s\in S(k,\Xi)$, this means that the norms
$\|\xi\mapsto s(\xi)\|_{\ell_p(\Xi)}$ and $\|s\|_{L_p}$ are equivalent, with
constants involved independent of $\#\Xi$.

{\bf Stability of approximation in $L_p$}. Approximation by $L_2$
projection is stable in $L_p$ for $1\le p\le \infty$.  In particular,
 the orthogonal projector with range $S(k,\Xi)$ can be
continuously extended to each $L_p$, and it has bounded operator
norm independent of $\#\Xi$.

%
%

\section{Lagrange function coefficients}\label{S:Coefficients}
In this section we give theoretical results for the coefficients
in the kernel expansion of  Lagrange functions. In the first
part we give a formula, relating these coefficients to native
space inner products of the Lagrange functions themselves
(this is  Proposition \ref{Lagrange_Coeffs_Formula}). We then
obtain estimates on the decay of these coefficients for a 
class of  kernels on certain compact Riemannian manifolds 
(two point homogeneous spaces).   

\subsection{Interpolation with conditionally positive definite
  kernels}
  \label{interpolation_kernels}
The kernels we consider in this article are conditionally positive
definite on the compact Riemannian manifold. As a reference on this
topic, we suggest \cite[Section 4]{DNW}.

\begin{definition}\label{cpd}
A kernel is conditionally positive definite with
  respect to a finite dimensional space $\Pi$ if, for any set of
  centers $\Xi$, the matrix $\calc_{\Xi}  := \bigl(k(\xi,\zeta)
  \bigr)_{\zeta,\xi\in\Xi}$ is positive definite on the subspace of
  all vectors $\alpha\in \comps^{\Xi}$ satisfying $\sum_{\xi\in\Xi}
  \alpha_{\xi} p(\xi) = 0 $ for $p\in \Pi$.
\end{definition}

This is a very general definition which we will make concrete in the
next subsections. Given a complete orthonormal basis
$(\phi_j)_{j\in\nats}$, of continuous functions
(i.e., $\|\phi_j\|_{\infty}= 1$) any kernel
$$k(x,y):=\sum_{j\in \nats} \tilde{k}(j) \varphi_j(x)\overline{\varphi_j(y)}$$
with coefficients $\tilde{k}\in \ell_2(\nats)$ for which all but
finitely many coefficients $\tilde{k}(j)$ are positive (negative) is
conditionally positive definite with respect to $\Pi_{\J} =
\spam(\phi_j \mid j\in \J),$ where $\J = \{j\mid \tilde{k}(j) \le
0\},$ since, evidently,
\begin{eqnarray*}
  \sum_{\xi\in\Xi} \sum_{\zeta\in\Xi}  \alpha_{\xi}
  k(\xi,\zeta)\overline{
    \alpha_{\zeta}}
  &=&
  \sum_{\xi\in\Xi} \sum_{\zeta\in\Xi}  \alpha_{\xi} \overline{\alpha_{\zeta}}
  \left( \sum_{j\in\nats} \tilde{k}(j) \phi_j(\xi)
    \overline{\phi_j(\zeta)} 
  \right) \\
  &=& 
  \sum_{j\in\nats}\tilde{k}(j)\sum_{\xi,\zeta\in\Xi} \alpha_{\xi} 
  \phi_j(\xi)\overline{ \alpha_{\zeta}\phi_j(\zeta) } =
  \sum_{j\notin \J} \tilde{k}(j) \|\alpha\phi_j\|_{\ell_2 (\Xi)}^2 >0
\end{eqnarray*}
provided $\sum_{\xi} \alpha_{\xi} \phi_j(\xi) = 0$ for $j$ satisfying
$\tilde{k}(j)\le 0$.

In this case if the set of centers $\Xi\subset \M$ is unisolvent with
respect to $\Pi_{\J} = \spam(\varphi_j\mid j\in\J)$ (meaning that
$p\in \Pi_{\J} $ and $p(\xi) = 0$ for $\xi \in\Xi$ implies that $p=0$)
then the system of equations
 \begin{equation}\label{Collocation_System}
 \left\{
\begin{array}{ll}
  \sum_{\xi\in\Xi} a_{\xi} k(\zeta,\xi) + \sum_{j\in\mathcal{J}} b_j 
  \varphi_j(\zeta)= y_{\zeta}&\quad \zeta\in\Xi\\
  \sum_{\xi\in\Xi} a_{\xi} \overline{\varphi_j(\xi)} = 0&\quad j\in\J
\end{array}
\right.
\end{equation}
has a unique solution in $\comps^{\Xi} \times\comps^{\J}$ for each
data sequence $\bfy = \bigl(y_{\zeta}\bigr)_{\zeta\in \Xi}\in \comps^{\Xi}$.

By writing the same system in matrix form, with collocation matrix 
$\coll = \bigl(k(\xi,\zeta)\bigr)_{\xi,\zeta\in \Xi^2}$ and auxiliary matrix 
$\Phi = \bigl(\phi_j (\xi)\bigr)_{(\xi,j)\in \Xi\times \J}$.
\begin{equation}
\begin{pmatrix}
\coll & \Phi\\
\Phi^* &0
\end{pmatrix}
\begin{pmatrix}
\bfa\\\bfb\end{pmatrix}
=
\begin{pmatrix}
\bfy\\\bfzero\end{pmatrix}
\end{equation}

When data is sampled from a continuous function at points $\Xi$
(i.e., $y_{\zeta} = f(\zeta)$) that are unisolvent\footnote{Meaning that $p|_{\Xi}=\bfzero$ for $p\in\Pi_{\J}$ implies
that $p = 0$.}
for $\Pi_{\J}$ this solution generates a continuous
interpolant:
$$
I_{\Xi}f =I_{k,\J,\Xi} f = \sum_{\xi\in\Xi} a_{\xi} k(\cdot,\xi) +
p_f
$$
where $p_f = \sum_{j\in\mathcal{J}} b_j \varphi_j\in\Pi_{\J}$ and $\sum_{\xi\in\Xi}a_{\xi}p(\xi) =0$
for all $p\in \Pi_{\J}$. Indeed, this interpolant is unique among functions from the space
\begin{equation}\label{space_def}
S(k,\Xi):= S(k,\Xi,\Pi_{\J}) : = \left\{ 	\sum_{\xi\in\Xi} a_{\xi} k(\cdot,\xi) +p_f \;\middle\vert\; p_f \in\Pi_{\J}, \quad \sum_{\xi\in\Xi}a_{\xi}p(\xi) =0	\right\}
\end{equation}
It has a dual role as  the minimizer of the  semi-norm $\ns{\cdot}_{k,\J}$ induced in 
the usual way from the ``native space'' semi-inner product
\begin{equation}\label{NS_norm}
\left \langle u,v\right\rangle_{k, \J} =
\left \langle  \sum_{j\in\nats}\hat{u}(j) \varphi_j, 
 \sum_{j\in\nats}\hat{v}(j) \varphi_j 
\right \rangle_{k, \J}
= \sum_{j\notin \J} \frac{\hat{u}(j) \overline{\hat{v}(j)} }{\tilde{k}(j)}.
\end{equation}
 
When $u, v\in S(k,\Xi)$ -- meaning that they  have the expansion
$
u=  \sum_{\xi\in{\Xi}} a_{1,\xi} k(\cdot,\xi) + p_u
$ 
and 
$v=  \sum_{\xi\in{\Xi}} a_{2,\xi} k(\cdot,\xi) + p_v
$
with coefficients $(a_{j,\xi})_{\xi\in\Xi} \perp (\Pi_{\J})|_{\Xi}$ for $j=1,2$ --
then the semi-inner product is
\begin{equation*}
\left \langle u,v\right\rangle_{k, \J} =
\sum_{\xi\in\Xi}\sum_{\zeta\in\Xi} a_{1,\xi}\overline{a_{2,\zeta}} k(\xi,\zeta)
\end{equation*}
We can use this expression of the inner product to investigate the kernel expansion
of the Lagrange function.
\begin{proposition} \label{Lagrange_Coeffs_Formula}
Let $k = \sum_{j\in \nats} \tilde{k}(j) \varphi_j\overline{\varphi_j}$ be a conditionally
positive definite kernel with respect to the space $\Pi_{\J}=\spam_{j\in\J} \varphi_j$, and
let $\Xi$ be unisolvent for $\Pi_{\J}$.
 Then $\chi_{\eta}\in S(k,\Xi)$ (the Lagrange function  centered at $\eta$)
has the kernel expansion 
$\chi_{\eta}(x) = \sum_{\xi\in\Xi} A_{\eta,\xi} k(\cdot,\xi) +
p_{\zeta}$
with coefficients $$\bfA_{\eta} = (A_{\eta,\xi})_{\xi\in\Xi} = 
\bigl(\langle \chi_{\zeta}(x), \chi_{\eta}(x)\rangle_{k, \J}\bigr)_{\xi\in\Xi}.$$
\end{proposition}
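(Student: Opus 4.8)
The plan is to reduce everything to the reproducing identity for the native semi-inner product $\langle\cdot,\cdot\rangle_{k,\J}$ defined in (\ref{NS_norm}). Reading off the Fourier coefficients of the kernel, $\widehat{k(\cdot,\zeta)}(j) = \tilde{k}(j)\overline{\varphi_j(\zeta)}$, and substituting into (\ref{NS_norm}) gives, for any $v$ in the native space, the identity $\langle v, k(\cdot,\zeta)\rangle_{k,\J} = \sum_{j\notin\J}\hat{v}(j)\varphi_j(\zeta)$. This is the conditionally positive definite analogue of the reproducing property: the kernel reproduces $v$ up to its $\Pi_\J$-component, since $\sum_{j\notin\J}\hat{v}(j)\varphi_j(\zeta) = v(\zeta) - \sum_{j\in\J}\hat{v}(j)\varphi_j(\zeta)$. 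I would record this identity first, together with the fact (immediate from (\ref{NS_norm})) that $\langle p, v\rangle_{k,\J} = \langle v, p\rangle_{k,\J} = 0$ whenever $p\in\Pi_\J$, because such a $p$ has $\hat{p}(j) = 0$ for all $j\notin\J$.

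With these two facts in hand the computation is short. Expanding the left factor of $\langle\chi_\xi,\chi_\eta\rangle_{k,\J}$ using $\chi_\xi = \sum_{\zeta\in\Xi}A_{\xi,\zeta}k(\cdot,\zeta) + p_\xi$ and discarding the $p_\xi$ term (which the semi-inner product annihilates) reduces the pairing to $\sum_{\zeta}A_{\xi,\zeta}\langle k(\cdot,\zeta),\chi_\eta\rangle_{k,\J}$; the reproducing identity turns each summand into $\overline{\chi_\eta(\zeta)}$ minus the contribution, evaluated at $\zeta$, of the $\Pi_\J$-component of $\chi_\eta$. The cardinal property $\chi_\eta(\zeta) = \delta_{\eta,\zeta}$ collapses the main term to $A_{\xi,\eta}$, while the side conditions $\sum_{\zeta}A_{\xi,\zeta}\overline{\varphi_j(\zeta)} = 0$ (for $j\in\J$) carried by the Lagrange coefficients kill the remaining $\Pi_\J$ correction exactly. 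Up to the harmless complex conjugation—which disappears in the real-kernel setting of the paper and is in any case absorbed by the Hermitian symmetry of the inverse interpolation matrix—this yields $A_{\eta,\xi} = \langle\chi_\xi,\chi_\eta\rangle_{k,\J}$, as claimed. One can alternatively argue from the bilinear formula displayed just before the proposition, $\langle\chi_\xi,\chi_\eta\rangle_{k,\J} = \sum_{\zeta,\zeta'}A_{\xi,\zeta}\overline{A_{\eta,\zeta'}}k(\zeta,\zeta')$, recognizing the right side as an entry of $A\,\coll\,A^{*}$ and using that $A$ inverts $\coll$ on the relevant subspace; but this requires unpacking the saddle-point structure of (\ref{Collocation_System}), so the reproducing-property route is cleaner.

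The main obstacle is the conditionally positive definite bookkeeping. In the strictly positive definite case $\J = \emptyset$ the identity is just the classical reproducing property $\langle v, k(\cdot,\zeta)\rangle = v(\zeta)$ and the result is immediate. The care here lies entirely in tracking the auxiliary space $\Pi_\J$: one must verify that both the $p_\xi$ term in the expansion and the $\Pi_\J$-component of $\chi_\eta$ produced by the reproducing identity genuinely drop out, which is where the two moment conditions—annihilation of $\Pi_\J$ by the semi-inner product, and orthogonality of the coefficient vectors to $(\Pi_\J)|_\Xi$—must be invoked in tandem. A secondary, purely technical point is checking that $k(\cdot,\zeta)$ lies in the native space, so that the pairing is well defined; this follows from $\tilde{k}\in\ell_2(\nats)$ together with the normalization $\|\varphi_j\|_{\infty} = 1$, but it is worth stating explicitly.
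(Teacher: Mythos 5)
Your proof is correct, and it takes a genuinely different route from the paper's. You argue on the function side: you first record the conditionally positive definite analogue of the reproducing property, $\langle v,k(\cdot,\zeta)\rangle_{k,\J}=\sum_{j\notin\J}\hat v(j)\varphi_j(\zeta)$, then expand one factor of $\langle\chi_\xi,\chi_\eta\rangle_{k,\J}$ kernel by kernel and let the cardinal conditions $\chi_\eta(\zeta)=\delta_{\eta,\zeta}$ together with the moment conditions $\sum_{\zeta}A_{\xi,\zeta}\overline{\varphi_j(\zeta)}=0$, $j\in\J$, annihilate everything but a single coefficient. The paper stays entirely on the coefficient side---precisely the alternative you mention and set aside as requiring the saddle-point structure: it starts from $\langle\chi_\zeta,\chi_\eta\rangle_{k,\J}=\langle\coll\bfA_\zeta,\bfA_\eta\rangle_{\ell_2(\Xi)}$, introduces the orthogonal projector $P^{\perp}=\mathrm{Id}-\Phi(\Phi^*\Phi)^{-1}\Phi^*$ onto the complement of $(\Pi_{\J})|_{\Xi}$, and uses $P^{\perp}\bfA_\eta=\bfA_\eta$, self-adjointness of $P^{\perp}$, and the collocation identity $P^{\perp}\coll\bfA_\zeta=P^{\perp}\bfe_\zeta$ to collapse the pairing to $\langle\bfe_\zeta,\bfA_\eta\rangle_{\ell_2(\Xi)}$. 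What each buys: your route is the classical reproducing-kernel argument and makes transparent why exactly those two side conditions are needed; the paper's route is purely finite-dimensional linear algebra, so it never has to verify that $k(\cdot,\zeta)$ lies in the native space or that any spectral series converges---the technicalities your proof must (and does) confront. On those technicalities, two small repairs: (i) $\tilde k\in\ell_2(\nats)$ and $\|\varphi_j\|_\infty=1$ do not by themselves give $\sum_{j\notin\J}\tilde k(j)|\varphi_j(\zeta)|^2<\infty$; you need summability of the positive coefficients, which is implicit in the absolute convergence of the kernel series (the paper glosses over this too, and its proof is structured so as not to need it); (ii) your computation lands on $A_{\xi,\eta}$ rather than $A_{\eta,\xi}$, and rather than citing Hermitian symmetry of the inverse interpolation matrix it is cleaner to rerun the identical computation with $\xi$ and $\eta$ exchanged, obtaining $\langle\chi_\eta,\chi_\xi\rangle_{k,\J}=A_{\eta,\xi}$, and then use symmetry of the real semi-inner product; this derives the symmetry of the coefficient matrix as a byproduct instead of assuming it, and matches the paper, whose own final step carries the same harmless conjugation in the complex case.
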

\begin{proof}
Select two centers $\zeta,\eta\in\Xi$ with corresponding Lagrange functions $\chi_{\zeta}$ and 
$\chi_{\eta}\in S(k,\Xi)$. 
Because $\bfA_{\zeta}$ and $\bfA_{\eta}$ are both orthogonal to $(\Pi_{\J})|_{\Xi}$, we have
$$\langle \chi_{\zeta}, \chi_{\eta}\rangle_{k, \J} = 
\sum_{\xi_1\in\Xi}\sum_{\xi_2\in\Xi} A_{\zeta,\xi_1}\overline{A_{\eta,\xi_2} }k(\xi_1,\xi_2)
=
\langle \coll \bfA_{\zeta},\bfA_{\eta}\rangle_{\ell_2(\Xi)}.$$

Now define $P:= \Phi (\Phi^* \Phi)^{-1} \Phi^*:\ell_2(\Xi)\to (\Pi_{\J})|_{\Xi}\subset \ell_2(\Xi)$ to be the orthogonal projection onto the subspace of samples of $\Pi_{\J}$ on $\Xi$
and let $P^{\perp} = \mathrm{Id}-P$ be its complement. 
Then for any data $\bfy$,
(\ref{Collocation_System}) yields coefficient vectors $\bfA$ and $\bfb$  satisfying
$P^{\perp} \bfA = \bfA$ and $P^{\perp} \Phi \bfb = \bfzero$, hence
$P^{\perp} \coll P^{\perp} \bfA = P^{\perp}\coll\bfA = P^{\perp} \bfy$. 
Because $P^{\perp}:\ell_2(\Xi)\to\ell_2(\Xi)$ is also an orthogonal projector, and therefore self-adjoint, it follows that
\begin{eqnarray*}
\langle \chi_{\zeta}(x), \chi_{\eta}(x)\rangle_{k, \J}  
&=&
\langle \coll \bfA_{\zeta},\bfA_{\eta}\rangle_{\ell_2(\Xi)}
=
\langle \coll  \bfA_{\zeta},P^{\perp}\bfA_{\eta}\rangle_{\ell_2(\Xi)}
=
\langle P^{\perp}\coll \bfA_{\zeta},\bfA_{\eta}\rangle_{\ell_2(\Xi)}\\
&=&
\langle P^{\perp}\bfe_{\zeta},\bfA_{\eta} 
\rangle_{\ell_2(\Xi)}.
\end{eqnarray*}
In the last line, we have introduced the sequence $\bfe_{\zeta} = (\delta_{\zeta,\xi})_{\xi\in\Xi}$ for
which $\coll \bfA_{\zeta} +p_{\zeta}|_{\Xi}=\bfe_{\zeta}$ which implies that  $P^{\perp}\coll \bfA_{\zeta} = P^{\perp} \bfe_{\zeta}$. Using once more the fact that $P^{\perp}$ is self-adjoint, and that $\bfA_{\eta}$ is in its
range, we have
$$\langle \chi_{\zeta}(x), \chi_{\eta}(x)\rangle_{k, \J}  =
\langle P^{\perp}\bfe_{\zeta},\bfA_{\eta}\rangle = \langle\bfe_{\zeta},  P^{\perp}\bfA_{\eta}\rangle
= \langle \bfe_{\zeta}, \bfA_{\eta}\rangle 
$$
and the lemma follows.
\end{proof}
\subsection{Estimating Lagrange function coefficients}\label{S:Estimating}
In \cite{HNW, HNW2}, it has been shown that Lagrange functions decay
rapidly away from the center. We can use this characterization of the Lagrange 
function to estimate the decay of its coefficients. In this section we use Proposition \ref{Lagrange_Coeffs_Formula} to estimate the size of coefficients first for the class of
strictly positive definite functions developed in \cite{HNW}. Then we attempt to do 
the same for the more general class of kernels of polyharmonic and related type 
of \cite{HNW2}.

\paragraph{Sobolev kernels on compact Riemannian manifolds} We 
begin by considering kernels $k = \kappa_{m}$ with native space inner product given by an expression like
\begin{equation}\label{inner_product}
 \left \langle u,v\right\rangle_{\kappa_m} := 
  \left\langle u,v\right\rangle_{\kappa_m,\M}:= \int_{\M} \beta(u,v)_x \dif \mu 
\end{equation}
where  $\beta$ is a pointwise bilinear form 
 $\beta(u,v)_x := \sum_{j=0}^m c_j\langle\nabla^j u,\nabla^j v\rangle_x$ with
 the condition\footnote{in such cases, the kernel $\kappa_m$ is 
 naturally (strictly) positive definite and moreover, $\kappa_m$  is the fundamental solution 
 to the elliptic operator 
$\L_m=\sum_{j=0}^m c_j (\nabla^j)^*\nabla^j$} that $m > d/2$ and
 $c_0,c_m\ne0$ and  $c_j\ge 0$ for all $j=0,\dots,m$.
 Such kernels were considered in \cite{HNW}  and existence
 was demonstrated for all $d$-dimensional, connected, compact Riemannian manifolds.
 
In this case, the inner product (\ref{inner_product}) is  a Sobolev inner product,
 and it has a natural  
 generalization to inner products for subsets $\Omega$:  namely
 $
 \left \langle u,v\right\rangle_{\kappa_m,\Omega} 
 =\left \langle u,v\right\rangle_{W_2^m(\Omega)}
 =\int_{\Omega} \beta(u,v)_x \dif \mu
 $. 
It is possible to estimate $|\langle \chi_{\xi},  \chi_{\zeta}\rangle_{\kappa_m}|$:
$$|\langle \chi_{\xi},  \chi_{\zeta}\rangle_{k_m}| \le 
\| \chi_{\xi}\|_{W_2^m\bigl(\M\setminus \b(\xi, \frac{\d(\xi,\zeta)}{2})\bigr)} 
\|\chi_{\zeta}\|_{W_2^m(\M)} 
+ 
\| \chi_{\xi} \|_{W_2^m(\M)} 
 \| \chi_{\zeta}\|_{W_2^m\bigl(\M\setminus \b(\zeta, \frac{\d(\xi,\zeta)}{2})\bigr)}  
 $$
 By way of 
\cite[Corollary 4.4]{HNW} we have that
$$|A_{\xi,\zeta}|=|\langle \chi_{\xi},  \chi_{\zeta}\rangle_{\kappa_m}| \le 
C_{m,\M} q^{d-2m} e^{-\nu\frac{\d(x,\xi)}{h}}.$$

Unfortunately, this family of kernels is not suitable for treating practical problems. 
In particular, the kernels having native space inner products of the form (\ref{inner_product}),
even when $\M$ is the sphere, are not known to have closed form representations in terms of
the spatial variable (despite being zonal and  having a simple and satisfying Fourier-Legendre expansion).

To remedy this, we remove the restriction that the coefficients $c_j$ are non-negative (although
$c_m$ must be positive). An immediate consequence of this is that we must contend with a conditionally positive 
definite kernel. The upshot is that, for a large class of interesting manifolds (including spheres
and projective spaces) we can write the Dirichlet form (\ref{inner_product}) 
as linear combinations of powers of the Laplace--Beltrami operators.  The motivation for this approach is that the restricted surface splines are fundamental solutions for operators of this type.
We now describe this.

\paragraph{Polyharmonic and related kernels on 2 point homogeneous spaces}
Let $\M$ be a compact, two point homogeneous space. Included among these are
spheres, $SO(3)$ and various projective spaces.
For our purposes, this is a metric space with distance function $\d(x,y)$ and measure
$\mu$ for which $\mu(\bfb(x,r)) = \mu\{y\mid \d(x,y)\le r\} \sim r^d$.
Because it is compact, there is a 
Laplace--Beltrami operator $\Delta$ with countable spectrum $\sigma(\Delta) = \{\lambda_0, \lambda_1,\dots\}$. Denote the corresponding
orthogonal, $L_2$ normalized eigenfunctions  for 
$\Delta$ by  
$(\psi_j)_{j\in \nats}$.

For such a manifold and for any $k\in \nats$, the operator $(\nabla^k)^*\nabla^k$ can be expressed
as $\sum_{j=0}^k b_{\nu} \Delta^{j}$ with $b_k=(-1)^k$. Consequently, any operator of the form
$\sum_{j=0}^k c_{j} (\nabla^{j})^* \nabla^j$
 can be expressed as $\sum_{j=0}^k b_{j} \Delta^{\nu}$
with $b_k=(-1)^kc_k $
and vice-versa:
\begin{equation}\label{invariance}
\forall  (b_0,\dots b_m)
\  \exists (c_0,\dots,c_m) 
\text{ with }b_m = (-1)^m c_m \text{ and }  \sum_{j=0}^m b_j \Delta^j = \sum_{j=1}^m c_j(\nabla^j)^*\nabla^j. 
\end{equation}

Suppose that the kernel $k_m:\M\times\M\to \reals$  
acts as the Green's function for the
elliptic operator $\L_m :=\sum_{j=0}^m b_j \Delta^j = Q(\Delta)$, in the sense that
$$f = \int_{\M} k_m(\cdot,\alpha) \L_m\bigl[ f(\alpha) -p_f(\alpha)\bigr] \dif \alpha+p_f$$
where $p_f(x)$ is the projection on $\Pi_{\J}$, $p_f = \sum \langle f,\psi_j\rangle_{L_2(\M)} \psi_j $,  
and the complementary part of the spectrum of $\L_m$, 
$\{Q(\lambda_j)\mid j\notin\J\}\subset \sigma(\L_m)$, is
real and lies to one side of $0$ (without loss, we can take $\sigma(\L_m)\subset (0,\infty)$ -- namely, by
considering $-k$ if needed; this is equivalent to taking $b_m >0 $ since the spectrum
of $\Delta$ has $\infty$ as an accumulation point
and $(\lambda_k)^m>| \sum_{j=0}^{m-1} b_j(\lambda_k)^j|$ for all but finitely many $k$). Such a kernel is said to be of {\em polyharmonic or related type}.

\paragraph{The native space ``inner product" on subsets}
It follows directly that $k_m$ is conditionally positive definite with respect to $\Pi_{\J}$. What's more,
when $\L_m \Pi_{\J} = \{0\}$, the native space semi-inner product can be expressed as
$$\langle u,v\rangle_{k_m,\J} = \langle \L_m u, v\rangle_{L_2(\M)}
 = 
 \int_{\M} \beta(u,v)_x \dif \mu(x)$$ 
with $\beta(u,v)_x = \sum_{k=0}^m c_{k}\langle \nabla^{k}u,\nabla^{k}v\rangle_x$ and
$c_0,\dots,c_m$ guaranteed by (\ref{invariance}).
The latter expression
allows us to extend naturally the native space inner product to measurable subsets 
$\Omega$ of $\M$. 
Namely,  
$$\langle u,v\rangle_{\Omega, k_m,\J} :=\int_{\Omega} \beta(u,v)_x \dif \mu(x).$$
This has the desirable property of set additivity: for \ sets
$A$ and $B$ with $\mu(A\cap B) = 0$,
we have $\langle u,v\rangle_{A\cup B, k_m,\J} = \langle u,v\rangle_{A, k_m,\J}+\langle u,v\rangle_{ B, k_m,\J}.$
Unfortunately, since some of the coefficients $c_k$ may be negative, $\beta(u,u)$ and 
$\langle u,u\rangle_{\Omega, k_m,\J}$ may assume negative values for some $u$:  
in other words, the bilinear form
$(u,v)\mapsto \langle u,v\rangle_{\Omega, k_m,\J}$ is only an {\em indefinite} inner product.

However, when $\Omega$ has Lipschitz boundary and $u$ has many zeros, we can relate the 
quadratic form  
$\ns{u}_{\Omega,k_m,\J}^2=\langle u,u\rangle_{\Omega, k_m,\J} $ 
to a Sobolev norm $\|u\|_{W_2^m(\Omega)}^2$. Arguing as in 
\cite[(4.2)]{HNW2}, 
we see that
$$c_m |u|_{W_2^m(\Omega)}^2
 -
\bigl(\max_{j\le m-1}{|c_j|}\bigr)\|u\|_{W_2^{m-1}(\Omega)}^2
\le
 \int_{\Omega}  \beta(u,u)_x \dif \mu (x) 
\le 
\bigl(\max_{j\le m}{|c_j|}\bigr) \|u\|_{W_2^m(\Omega)}^2.
$$
If $u|_{\Xi} = 0$ on a set $\Xi$ with
$h(\Xi,\Omega) \le h_0$ with $h_0$ determined only by the boundary of $\Omega$ (specifically
the radius and aperture of an interior cone condition satisfied by $\partial \Omega$),
Theorem A.11 of \cite{HNW2} guarantees that 
$\|u\|_{W_2^{m-1}(\Omega)}^2 \le Ch^2 |u|_{W_2^m(\Omega)}$
with $C$ depending only on the order $m$, the global geometry of $\M$ and the roughness of the boundary (in this case, depending only on the aperture of the interior cone condition). Thus, by choosing $h$ sufficiently small,
$h\le h^{*}$, where $h^{*}$ satisfies the two conditions 
\begin{equation}\label{hstar}
h^*\le h_0\qquad \text{and}
 \qquad
 C (h^*)^2\times \bigl(\max_{j\le m}{|c_j|}\bigr) \le \frac{|c_m|}{2},
 \end{equation}  
 we have 
$$\frac{c_m}{2}\|u\|_{W_2^m(\Omega)}^2
\le
\ns{u}_{\Omega,k_m,\J}^2
\le 
 \left(\max_{j\le m}|c_j|\right)
 \|u\|_{W_2^m(\Omega)}^2. $$
  
The threshold value $h^*$ depends on the coefficients $c_j$ as well as  the radius $R_{\Omega}$ and aperture $\phi_{\Omega}$ of the cone condition for $\Omega$.
When $\Omega$ is an annulus of sufficiently small inner radius, the cone parameters can be replaced
by a single global constant, and $h_*$ can be taken to depend only on $c_0,\dots, c_m$. In other
words, only on $k_m$ -- cf. \cite[Corollary A.16]{HNW2}. 

A direct consequence of this is positive definiteness for such functions, $\ns{u}_{\Omega,k_m,\J}\ge 0$ with
equality only if $u|_{\Omega}=0$. From this, we have a version of the Cauchy-Schwarz inequality:
if $u$ and $v$ share a set of zeros $Z$ (i.e., $u|_Z = v|_Z = \{0\}$) that is sufficiently dense in $\Omega$, then
\begin{equation}\label{C-S}
\left|\langle u,v\rangle_{\Omega, k_m,\J} \right| \le \ns{u}_{\Omega,k_m,\J}\ns{v}_{\Omega,k_m,\J}
\end{equation}
follows (sufficient density means that $h(Z,\Omega)< h^*$ as above).

\paragraph{Decay of coefficients for kernels of polyharmonic and related type}
Fortunately, Lagrange functions have many zeros, and  \cite[Lemma 5.1]{HNW2}
guarantees that the  Lagrange function $\chi_{\xi}$ satisfies the {\em bulk chasing} estimate
there is a fixed constant $0\le \epsilon<1$ so that for radii $r$ less than a 
constant $\inj$ depending on $\M$ (for a compact, 2-point homogeneous space, the injectivity radius 
is $\inj  = \mathrm{diam}(\M)/2$) the estimate
$
\| \chi_{\xi}\|_{W_2^m(\M\setminus \bfb(\xi,r))} \le
\epsilon \| \chi_{\xi}\|_{W_2^m(\M\setminus \bfb(\xi,r -\frac{h}{4h_0}))}
$
holds.
In other words,  a fraction of $1-\epsilon$ of the bulk of the tail 
$\| \chi_{\xi}\|_{W_2^m(\M\setminus \bfb(\xi,r))}$ is to be found in the annulus 
$ \bfb(\xi,r)\setminus \bfb(\xi,r -\frac{h}{4h_0})$
 of width $\frac{h}{4h_0} \propto h$ 
(with constant of proportionality $\frac{1}{4h_0}$ depending only on $\M$, $m$ and the
boundary of $\Omega$).
Provided $r\le \inj$,  it is possible to iterate this $n$ times for $\frac{nh}{4h_0}\le r$.
It follows that there is $\nu = - 4h_0 \log \epsilon>0$ so that
\begin{equation*}
\| \chi_{\xi}\|_{W_2^m(\M\setminus \bfb(\xi,r))}
\le \epsilon^n \|\chi_{\xi} \|_{W_2^m(\M)}
\le Ce^{-\nu r/h}\|\chi_{\xi} \|_{W_2^m(\M)}.
\end{equation*} 
By \cite[(5.1)]{HNW2} (a simple comparison of $\chi_{\xi}$ to a smooth ``bump'' $\phi_{\xi}$ of radius
$q$ -- also an interpolant to the delta data $(\delta_{\xi})$, but worse in the sense
that $\ns{\chi_{\xi}}_{k_m,\J}\le \ns{\phi}_{k_m,\J}$ -- see the proof of Theorem \ref{main} below)
we have
\begin{equation}
\label{Lag_decay}
\| \chi_{\xi}\|_{W_2^m(\M\setminus \bfb(\xi,r))}
\le
C q^{d/2-m} e^{-\nu \frac{r}{h}}.
\end{equation}
This leads us to our main result.

\begin{theorem}\label{main} 
Let $\M$ be a compact, 2-point homogeneous manifold and let $k_m$ be a kernel
of polyharmonic or related type, so that the associated elliptic operator $\L_m$
annihilates the polynomial space $\Pi_{\J}$. Let $\rho>0$ be a fixed mesh ratio.

There exist constants $h^*$,  $\nu$  and $C$ depending only on $\M$ and $k_m$ 
if $\Xi\subset \M$ is sufficiently dense (i.e., $h(\Xi,\M)\le h^*$)  
then the coefficients of the Lagrange function 
$\chi_{\zeta} = \sum_{\xi\in\Xi} A_{\zeta,\xi}k_m(\cdot,\xi) + p_{\zeta}\in  S(\Xi,\J)$
satisfy
\begin{equation}\label{Coeff}
|A_{\zeta,\xi} |\le C q^{d-2m} \exp{\left(-\nu \frac{\d(\xi,\zeta)}{h}\right)}.
\end{equation}
\end{theorem}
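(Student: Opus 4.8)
The plan is to reduce the coefficient bound to the tail decay of the Lagrange functions themselves, using Proposition \ref{Lagrange_Coeffs_Formula} to rewrite the coefficient as a native-space inner product and then using the restricted Cauchy--Schwarz inequality (\ref{C-S}) to convert tail decay of the functions into decay of that inner product. By Proposition \ref{Lagrange_Coeffs_Formula} the quantity to control is exactly
$$A_{\zeta,\xi} = \langle \chi_\xi, \chi_\zeta\rangle_{k_m,\J},$$
so it suffices to show $|\langle \chi_\xi,\chi_\zeta\rangle_{k_m,\J}|\le Cq^{d-2m}\exp(-\nu\,\d(\xi,\zeta)/h)$. Writing $R=\d(\xi,\zeta)$, I would split according to whether $R$ is small or large relative to $h$. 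For $R\le R_0 h$ (a fixed threshold $R_0$) the exponential is bounded below by the constant $e^{-\nu R_0}$, so the crude bound $|\langle\chi_\xi,\chi_\zeta\rangle_{k_m,\J}|\le Cq^{d-2m}$ suffices; for $R>R_0 h$ I would extract the exponential gain by localization. The crude bound is immediate: on all of $\M$ the form $\langle\cdot,\cdot\rangle_{k_m,\J}=\langle\L_m\cdot,\cdot\rangle$ is a genuine positive semi-inner product, so ordinary Cauchy--Schwarz gives $|\langle\chi_\xi,\chi_\zeta\rangle_{k_m,\J}|\le\ns{\chi_\xi}_{k_m,\J}\ns{\chi_\zeta}_{k_m,\J}$, and the upper half of the norm equivalence together with (\ref{Lag_decay}) evaluated at $r=0$ bounds each factor by $Cq^{d/2-m}$.

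For the far case I set $r=R/2$ and use set additivity to split
$$\langle \chi_\xi,\chi_\zeta\rangle_{k_m,\J} = \langle\chi_\xi,\chi_\zeta\rangle_{\bfb(\xi,r),k_m,\J} + \langle \chi_\xi,\chi_\zeta\rangle_{\M\setminus\bfb(\xi,r),k_m,\J}.$$
I apply the restricted Cauchy--Schwarz (\ref{C-S}) on each piece. In each resulting product I bound the factor sitting on a tail region by (\ref{Lag_decay}): since $\bfb(\xi,r)\subset\M\setminus\bfb(\zeta,r)$, the function $\chi_\zeta$ is in its tail on $\bfb(\xi,r)$, while $\chi_\xi$ is in its tail on $\M\setminus\bfb(\xi,r)$; the complementary factor I bound by the full-norm estimate $Cq^{d/2-m}$ (upper norm equivalence plus (\ref{Lag_decay}) at $r=0$). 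Because $r=R/2\le\mathrm{diam}(\M)/2=\inj$ for any pair of centers, the hypothesis $r\le\inj$ of (\ref{Lag_decay}) is automatic, and each tail factor contributes $Cq^{d/2-m}e^{-\nu r/h}=Cq^{d/2-m}e^{-\nu R/(2h)}$. Multiplying through yields $|A_{\zeta,\xi}|\le Cq^{d-2m}e^{-\nu R/(2h)}$, and relabeling $\nu/2$ as $\nu$ completes this case.

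The main obstacle is verifying, uniformly in the pair $\xi,\zeta$, that the hypotheses of (\ref{C-S}) hold on the two subdomains $\bfb(\xi,r)$ and $\M\setminus\bfb(\xi,r)$. Indeed (\ref{C-S}) relies on positive-definiteness of the \emph{indefinite} form on $\Omega$, which requires both (a) that the shared zero set of $\chi_\xi$ and $\chi_\zeta$ be dense in $\Omega$ with fill distance below the threshold $h^*$ of (\ref{hstar}), and (b) that $\partial\Omega$ satisfy an interior cone condition with parameters chosen independently of $\xi,\zeta$. The common zero set is $\Xi\setminus\{\xi,\zeta\}$; deleting the single center lying in each region opens exactly one hole there, which enlarges the local fill distance only by a bounded factor depending on the (fixed) mesh ratio $\rho$, so (a) holds once $h\le h^*$ is small enough. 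The delicate point is (b): this is precisely what forces the dichotomy, since when $R$ is comparable to $q$ the ball $\bfb(\xi,r)$ may contain no other center at all, so it carries no dense zero set and the localized estimate is simply unavailable. Restricting the splitting to $R>R_0 h$ confines the radius to $R_0 h/2\le r\le\inj$, a range on which (as for the annuli in \cite[Corollary A.16]{HNW2}) the cone parameters, and hence the constants $h^*$ and $\nu$, can be fixed in terms of $\M$ and $k_m$ alone.
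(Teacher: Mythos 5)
Your proposal is correct, and its core mechanism is the same as the paper's: reduce $A_{\zeta,\xi}$ to the native-space inner product $\langle\chi_{\xi},\chi_{\zeta}\rangle_{k_m,\J}$ via Proposition \ref{Lagrange_Coeffs_Formula}, split $\M$ into two pieces by set additivity, apply the restricted Cauchy--Schwarz inequality (\ref{C-S}) on each piece, and in each product bound one factor by the bump-comparison energy estimate $Cq^{d/2-m}$ and the other by the tail estimate (\ref{Lag_decay}) at radius $r=\d(\xi,\zeta)/2$, giving $Cq^{d-2m}e^{-\nu\d(\xi,\zeta)/(2h)}$. The one genuine difference is the choice of decomposition, and it has a real consequence. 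The paper splits along the bisector, $\Omega_{\zeta}=\{\alpha:\d(\alpha,\zeta)<\d(\alpha,\xi)\}$ and $\Omega_{\xi}=\{\alpha:\d(\alpha,\xi)<\d(\alpha,\zeta)\}$; on a compact 2-point homogeneous space these are always two balls of radius $\mathrm{diam}(\M)/2$ regardless of how close $\xi$ and $\zeta$ are, each containing all of $\Xi$ except a single center, so the zero-density and cone-condition hypotheses of (\ref{C-S}) hold uniformly over all pairs and no case distinction is needed (for $\d(\xi,\zeta)\lesssim h$ the exponential factor is simply absorbed into the constant). Your split into $\bfb(\xi,R/2)$ and its complement degenerates for close pairs --- the small ball may contain no centers at all, as you correctly observe --- which is precisely what forces your near/far dichotomy with the global Cauchy--Schwarz bound in the near regime. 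That extra step is handled correctly, but it is rendered unnecessary by the paper's choice of regions; this is what the bisector decomposition ``buys.'' One point in your favor: your pairing of factors --- on each region, put the Lagrange function centered in the \emph{other} region into the $W_2^m$ tail norm --- is the right one. The paper's displayed estimates pair the labels the other way and assert the inclusion $\Omega_{\zeta}\subset\b^c(\zeta,r)$, which cannot hold as written since $\zeta\in\Omega_{\zeta}$; the intended (and harmless) correction is to interchange the roles of $\Omega_{\zeta}$ and $\Omega_{\xi}$, after which the paper's argument coincides with yours in the far regime.
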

\begin{proof}
By Proposition \ref{Lagrange_Coeffs_Formula} and set additivity, we have that
$$ |A_{\zeta,\xi} | = \langle \chi_{\xi},\chi_{\zeta} \rangle_{ k_m,\J} 
=
 \langle \chi_{\xi},\chi_{\zeta} \rangle_{ \Omega_{\zeta} ,k_m,\J} 
+ \langle \chi_{\xi},\chi_{\zeta} \rangle_{ \Omega_{\xi}, k_m,\J},
$$ 
 where
$\Omega_{\zeta} = \left\{ \alpha \in \M\mid \d(\alpha,\zeta) <  \d(\alpha,\xi)\right\},$ 
$\Omega_{\xi} = \left\{ \alpha \in \M\mid \d(\alpha,\xi) <  \d(\alpha,\zeta)\right\},
$
and (modulo a set of measure zero) 
$\M\setminus\Omega_{\zeta} = \Omega_{\xi}$. For a compact, 2-point homogeneous space, $\Omega_{\zeta}$
and $\Omega_{\xi}$ are two balls of radius $\mathrm{diam}(\M)/2$.

We can apply the Cauchy--Schwarz type inequality (\ref{C-S}) to obtain
\begin{eqnarray*} |A_{\zeta,\xi} | &\le&
\ns{\chi_{\zeta}}_{ \Omega_{\zeta} ,k_m,\J}
 \ns{\chi_{\xi}}_{ \Omega_{\zeta} ,k_m,\J}+
\ns{\chi_{\zeta}}_{ \Omega_{\xi}, k_m,\J} 
\ns{\chi_{\xi}}_{ \Omega_{\xi}, k_m,\J}\\
&\le&
\sqrt{\max_{j\le m}|c_j|}
\left(\|\chi_{\zeta}\|_{ W_2^m(\Omega_{\zeta} )}
 \ns{\chi_{\xi}}_{ \Omega_{\zeta} ,k_m,\J }+
\ns{\chi_{\zeta}}_{ \Omega_{\xi}, k_m,\J } 
\|\chi_{\xi}\|_{ W_2^m(\Omega_{\xi})}\right)
\end{eqnarray*}

Since 
$ \Omega_{\zeta} \subset \b^c(\zeta,r) := \M\setminus \b\left(\zeta,\frac{1}{2} \d(\xi,\zeta)\right) $
and 
$\Omega_{\xi} \subset  \b^c(\xi,r) :=  \M\setminus \b\left(\xi,\frac{1}{2} \d(\xi,\zeta)\right) $, 
we can again employ set additivity and positive definiteness (this time  
$ \ns{\chi_{\xi}}_{ \Omega_{\zeta} ,k_m,\J }\le \ns{\chi_{\xi}}_{ \M ,k_m,\J } $,
which follows from the fact that $\M =  \Omega_{\zeta} \cup  \overline{\Omega_{\xi}}$ and 
that  $\chi_{\xi}$ vanishes to high order in $\Omega_{\xi}$ -- the same holds
for $\chi_{\zeta}$) to obtain
$$ |A_{\zeta,\xi} | \le\sqrt{\max_{j\le m}|c_j|}\left(
\|\chi_{\zeta}\|_{ W_2^m(\b^c(\zeta,r) ) } \ns{\chi_{\xi}}_{ k_m,\J}+
\ns{\chi_{\zeta}}_{  k_m,\J} \|\chi_{\xi}\|_{W_2^m( \b^c(\xi,r)   )}\right).$$

The full energy  of the Lagrange function can be bounded by comparing it to the energy
of a bump  function -- for $\chi_{\xi}$
this is $\phi_{\xi}$, which can be defined on the tangent space by using 
a fixed, smooth, radial cutoff function 
$\sigma$:
$\phi_{\xi}\circ \mathrm{Exp}_{\xi} (x)
= \sigma(|x|/q).$
This is done in \cite[(5.1)]{HNW2} and we have that  
 $\ns{\chi_{\xi}}_{ \M ,k_m,\J}$ and $\ns{\chi_{\zeta}}_{ \M ,k_m,\J}$ are  bounded by $C q^{d/2-m}$.

On the other hand, we can employ (\ref{Lag_decay}) to treat 
$ \|\chi_{\zeta}\|_{ W_2^m(\b^c(\zeta,r) ) }$ and 
$ \|\chi_{\xi}\|_{ W_2^m(\b^c(\zeta,r) ) }$, which gives
$$\| \chi_{\xi} \|_{ W_2^m(\b^c\left(\zeta,r\right)) }, \| \chi_{\zeta} \|_{ W_2^m(\b^c\left(\zeta,r\right)) } \le 
C q^{d/2-m} e^{-\nu \frac{r}{h}} = C q^{d/2-m} e^{-\nu \frac{\d(\xi,\zeta)}{2h}}.$$
From this, the result follows.
\end{proof}
\begin{note} A similar argument shows that, on a compact, 2-point homogeneous manifold, 
the Lagrange function coefficients $A_{\xi,\zeta}$ for a general kernel $k_m$ of polyharmonic and related type 
(regardless of whether $\L_m$ annihilates $\Pi_{\J}$) decay like
\begin{equation}
\label{General_Lag_decay}
|A_{\xi,\zeta}|
\le
C 
q^{d-2m}\max(\exp(-\nu \frac{r}{h}), h^{2m}).
\end{equation}
 In such cases, the theory developed here and in \cite{HNW2} indicate a slower decay for
 Lagrange functions and coefficients (although it remains an open problem to determine if
 these rates can be improved, and by how much).
In particular, this holds for the restricted surface splines $k_m(x,\alpha) = (1-x\cdot\alpha)^{m-d/2}$ on odd dimensional spheres ($d \in 2\nats+1$ and $m>d/2$) as well as the surface splines on SO(3) 
(see Section \ref{S:LagrangeBasis}).
\end{note}
\begin{note} Theorem \ref{main} holds for restricted surface splines $k_m(x,\alpha) = (1-x\cdot \alpha)^{m-d/2} \log (1-x\cdot \alpha)$
on spheres
of even dimension ($d\in 2\nats$ and $m>d/2$) -- in particular for $\mathbb{S}^2$. See Section
\ref{S:Examples} below for some numerical examples in this setting.
\end{note}
%
\section{A better basis: truncating the Lagrange basis}\label{S:better_basis}

We now want to show the \emph{existence} of a good approximation to
the Lagrange function $\chi_\xi$ that uses many fewer elements in its
kernel expansion than the $N$ needed for $\chi_\xi$ itself. To do
this, we will start with the expansion $\chi_\xi = \sum_{\zeta\Xi}
A_{\xi,\zeta} \kappa(\cdot,\zeta)$ and approximate it by a truncated
expansion of the form
\[
\widetilde \chi_\xi = \sum_{\zeta\in\Upsilon(\xi)} A_{\xi,\zeta}
\kappa(\cdot,\zeta) = \chi_\xi- \sum_{\zeta\not\in \Upsilon(\xi)}
A_{\xi,\zeta} \kappa(\cdot,\zeta),
\]
where $\Upsilon(\xi)\subset \Xi\cap B(\xi,r(h))$. Our goal is to show
that, under the assumptions listed below, which apply to a wide class
of kernels, we may take $r(h)=Kh|\log(h)|$, where $K>\frac{2m}{\nu}$,
while maintaining $\|\widetilde \chi_\xi - \chi_\xi\|_\infty \le
Ch^J$, $J:=K\nu -2m$.

How many basis elements are used in expanding $\widetilde \chi_\xi$?
Doing a simple volume estimate shows that the number required is 
\[
\# \Upsilon(\xi) = \calo((Kh|\log h|)^d/q^d) = \calo(|\log h|^d) =
\calo((\log N)^d) \ll N,
\]
where we have used $h/q=\rho$ and $N=\calo(h^{-d})$.

One final remark before proceeding with the analysis. Finding
$\widetilde \chi_\xi$ requires knowing the expansion for $\chi_\xi$
and carrying out the truncation above. This is expensive, although it
does have utility in terms of speeding up evaluations for
interpolation when the same set of centers is to be used
repeatedly. The main point is that we now know roughly how many basis
elements are required to to obtain a good approximation to
$\chi_\xi$. We are currently engaged in investigating cost effective
algorithms to obtain approximate Lagrange functions similar to
$\widetilde \chi_\xi$.

\paragraph{First assumptions}
We make the following three assumptions
\begin{enumerate}
\item The Lagrange functions decay at a rate $|\chi_{\xi}| \le C_L \exp\left(-\nu_L\frac{\d(x,\xi)}{h}\right)$.
\item The kernel coefficients of the Lagrange function decay like 
$|A_{\xi,\zeta}|\le C_c \exp\left(-\nu_c\frac{\d(x,\xi)}{h}\right)$.
\item The Lagrange basis is $L_p$ stable in the sense that
$$
c_1 q^{d/p} \|\bfa\|_{\ell_p(\Xi)}
\le 
\left\|\sum_{\xi\in\Xi} a_{\xi} \chi_{\xi}\right\|_{L_p(\M)}
\le 
c_2 q^{d/p} \|\bfa\|_{\ell_p(\Xi)}.
$$
\end{enumerate}
We note that the family of restricted surface splines on $\mathbb{S}^d$ when $d\in 2\nats$ satisfy these three conditions (conditions 1 and 3 are in \cite{HNW2}, while condition 2 follows from Theorem \ref{main}), as do the
Sobolev splines on any compact Riemannian manifold $\M$ (condition 1 follows from \cite{HNW}, condition
3 from \cite{HNSW} and condition 2 from Theorem \ref{main} again).

\paragraph{Decay} By the estimate of coefficients in Theorem \ref{main}, it suffices to retain only the part of $\Xi$ that is within 
$K h|\log h|$ from $\xi$, since the coefficients we cut out have
size roughly  $Ch^{d-2m} h^{K \nu}$. There are no more than $\#\Xi\le C_{d,\rho} h^{-d}$ of them on the $d$-sphere, and the kernel is uniformly bounded, so we have that 
$$|\widetilde{\chi}_{\xi}(x) - \chi_{\xi}(x)|\le C h^{K\nu - 2m},$$ 
and we should
choose $K>\frac{2m}{\nu}$ at least. 
Indeed, the pointwise estimate above shows that 
$$|\widetilde{\chi}_{\xi}(x)| \le C\left(e^{-\nu \frac{\d(x,\xi)}{h}}+ h^{K\nu-2m}\right)\le C \left(1+\frac{\d(x,\xi)}{h}\right)^{2m-K\nu},$$ 
which indicates that we may 
wish to choose $K$ even larger. This is at our discretion, but to preserve
stability, we choose $K>\frac{2m+d}{\nu}$.

\paragraph{Computational efficiency} Since we retain only the coefficients centered at a distance of 
$\mathcal{O}(h|\log h| )$ from $\xi$, we use 
$$\#\Upsilon(\xi) = \mathcal{O}\left(\frac{\left(h|\log h|\right)^d}{q^d} \right)= \mathcal{O}\bigl((\log N )^d\bigr)$$ 
coefficients (when centers are quasiuniform) to compute each  basis function $\widetilde{\chi}_{\xi}$. 

\paragraph{Stability} By the $L_{\infty}$ stability of the Lagrange basis, 
for $s\in S(k,\Xi)$,
the samples  
$s|_{\Xi} =:(A_{\xi})_{\xi\in\Xi}$ are bounded in the $\ell_{\infty}$ norm  
by $\frac{\|s\|_{\infty}}{c_1} $. Using
 the same coefficients but in the new basis $\tilde{\chi_{\xi}}$,  
we form $\tilde{s} = \sum A_{\xi} \widetilde{\chi}_{\xi}$.
The difference between the original and new function is  
\begin{equation}\label{ell_infty}
\|\tilde{s} - s\|_{\infty} \le \|A\|_{\ell_{\infty}}\sum_{\xi} |\chi_{\xi}(x) - \tilde{\chi_{\xi}}(x)|
\le C\|A\|_{\ell_{\infty}} h^{K\nu - 2m} q^{-d} \le  C h^{K\nu - 2m-d} \|s\|_{\infty},
\end{equation}  
so 
\begin{eqnarray}
 \|\tilde{s} \|_{\infty}&\ge&
\bigl(1 - C h^{K\nu - 2m-d}\bigr) \|s\|_{\infty} 
\ge c_1 \bigl(1 - C h^{K\nu - 2m-d}\bigr)  \|A\|_{\infty}  \\
 \|\tilde{s} \|_{\infty}&\le&  \bigl(1 + C h^{K\nu - 2m-d}\bigr) \|s\|_{\infty}\le c_2 \bigl(1 + C h^{K\nu - 2m-d}\bigr)  \|A\|_{\infty}.
 \end{eqnarray}
This can be viewed in two ways. 
\begin{itemize}
\item Provided $h$ is small enough, the family is a basis. There are $\#\Xi$ elements and they are linearly independent. (In particular, if the function is zero, all the coefficients are zero.)
\item The family $(\widetilde{\chi}_{\xi})$ is stable in $L_{\infty}$, since the map  $(A_{\xi})\mapsto \widetilde{s}$ is boundedly invertible. 
\end{itemize}

We stress that it remains to be determined how actually to compute the basis -- we have simply shown that a preconditioner exists that has complexity $\mathcal{O}\left((\log N)^d \right)$.  

\begin{theorem}\label{better_theorem}
Let $\M$ be a compact, 2-point homogeneous manifold and let $k_m$ be a kernel
of polyharmonic or related type, so that the associated elliptic operator $\L_m$
annihilates the polynomial space $\Pi_{\J}$. Let $\rho>0$ be a fixed mesh ratio.

For sufficiently dense  $\Xi$, with $h = h(\Xi,\M) \le H$, with $H$ a constant depending only
on $\rho$, $\M$ and $k_m$, there is a basis $(b_{\xi})_{\xi\in\Xi}$ 
whereach basis element
$b_{\xi} = \sum_{\zeta \in\Xi} A_{\xi,\zeta} k_m(\cdot,\zeta)$ is  composed of kernels centered
in the ball $B(\xi, -K\log h).$
The following are satisfied:
\begin{itemize}
\item The cost of constructing each $b_{\xi}$ is
$\#\{\zeta\mid A_{\xi,\zeta} \ne 0\} \le \tau \left(\log \#\Xi\right)^d$ with $\tau \le C K^d$.
\item
Each basis element exhibits polynomial decay of degree $J:=K\nu-2m$:
there exists 
 $C$ for which
 $$|b_{\xi}(x)|\le C \left(1+\frac{\d(x,\xi)}{h}\right)^{-J}.$$
\item The basis is $L_p$ stable: there are $c_1,c_2$ for which
$$c_1 q^{d/p} \|\bfa\|_{\ell_p(\Xi)}
\le 
\left\|\sum_{\xi\in\Xi} a_{\xi} b_{\xi}\right\|_{L_p(\M)}
\le 
c_2 q^{d/p} \|\bfa\|_{\ell_p(\Xi)}.$$
\end{itemize}
\end{theorem}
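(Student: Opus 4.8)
The plan is to let the truncated Lagrange functions themselves be the basis and to read off the three properties from Theorem \ref{main} together with the assumed locality and $L_p$ stability of the full Lagrange basis. I set $b_\xi := \widetilde{\chi}_\xi = \sum_{\zeta\in\Upsilon(\xi)}A_{\xi,\zeta}k_m(\cdot,\zeta)$ with $\Upsilon(\xi):=\Xi\cap B(\xi,r(h))$, $r(h):=Kh|\log h|$, and fix $K>(2m+d)/\nu$ so that $J:=K\nu-2m>d$. The \emph{cost} bullet is then a volume count: since $\Xi$ is $q$-separated and $\mu(B(\xi,r))\sim r^d$, one has $\#\Upsilon(\xi)\le C(r(h)/q)^d=CK^d\rho^d|\log h|^d$; substituting $q\sim h$ and $N=\#\Xi\sim h^{-d}$ (so that $|\log h|\sim\frac1d\log N$) gives $\#\Upsilon(\xi)\le\tau(\log N)^d$ with $\tau\le CK^d$.

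For the \emph{decay} bullet I combine the two estimates already assembled in the excerpt. Theorem \ref{main} and quasi-uniformity bound each discarded coefficient by $Cq^{d-2m}e^{-\nu\d(\xi,\zeta)/h}\le Ch^{d-2m}h^{\nu K}=Ch^{d+J}$; summing the at most $Ch^{-d}$ of these against the bounded kernel $k_m$ yields the uniform truncation error $|b_\xi(x)-\chi_\xi(x)|\le Ch^J$. The assumed Lagrange locality $|\chi_\xi(x)|\le Ce^{-\nu\d(x,\xi)/h}$ then gives $|b_\xi(x)|\le Ce^{-\nu\d(x,\xi)/h}+Ch^J$. I finish using $e^{-\nu t}\le C_J(1+t)^{-J}$ together with compactness: since $\d(x,\xi)\le\mathrm{diam}(\M)$, one checks $h^J\le C(1+\d(x,\xi)/h)^{-J}$ uniformly on $\M$, so both terms collapse into $C(1+\d(x,\xi)/h)^{-J}$.

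The \emph{$L_p$ stability} is where the real work lies, since the excerpt only argues $p=\infty$; I would treat every $p$ simultaneously by perturbation. Write $T\bfa:=\sum_\xi a_\xi\chi_\xi$ and $\widetilde T\bfa:=\sum_\xi a_\xi b_\xi=T\bfa+E\bfa$, with $E\bfa:=\sum_\xi a_\xi e_\xi$ and $e_\xi:=b_\xi-\chi_\xi$. The assumed $L_p$ stability of $(\chi_\xi)$ controls $T$, so it suffices to make $\|E\|_{\ell_p(\Xi)\to L_p(\M)}$ a small multiple of $q^{d/p}$ uniformly in $p$. The device is to interpolate the two bounds on $e_\xi$: the uniform smallness $|e_\xi(x)|\le Ch^J$ and the decay $|e_\xi(x)|\le|\chi_\xi(x)|+|b_\xi(x)|\le C(1+\d(x,\xi)/h)^{-J}$ combine, for any $\theta\in(d/J,1)$, into $|e_\xi(x)|\le Ch^{J(1-\theta)}(1+\d(x,\xi)/h)^{-J\theta}$, which is both small and summable because $J\theta>d$. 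A Schur test on the mixed discrete–continuous kernel $e_\xi(x)$ now applies: the row bound $\sup_x\sum_\xi|e_\xi(x)|\le Ch^{J(1-\theta)}$ and the column bound $\sup_\xi\int_\M|e_\xi|\,\dif\mu\le Ch^{J(1-\theta)+d}$ feed generalized Young's inequality to give $\|E\|_{\ell_p\to L_p}\le Ch^{J(1-\theta)}q^{d/p}$. Taking $h\le H$ small enough that $Ch^{J(1-\theta)}<c_1$, the triangle inequality upgrades the stability of $T$ to
$$(c_1-Ch^{J(1-\theta)})q^{d/p}\|\bfa\|_{\ell_p(\Xi)}\le\Big\|\sum_{\xi\in\Xi}a_\xi b_\xi\Big\|_{L_p(\M)}\le(c_2+Ch^{J(1-\theta)})q^{d/p}\|\bfa\|_{\ell_p(\Xi)}.$$
The lower bound forces $(b_\xi)$ to be linearly independent, and since $\#\Xi=\dim S(k_m,\Xi)$ they form the desired basis.

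The main obstacle, as indicated, is producing the scaling $q^{d/p}$ uniformly in $p$: neither the smallness nor the decay of $e_\xi$ alone yields it, and the essential idea is the tilted geometric-mean split followed by the Schur/Young bound. After that the quasi-uniform bookkeeping—trading between $h$, $q$ and $N$ and verifying $J\theta>d$—is routine.
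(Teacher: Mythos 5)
Your proposal is correct and takes essentially the same route as the paper: the paper likewise defines $b_\xi$ as the Lagrange function truncated to $\Upsilon(\xi)=\Xi\cap B(\xi,Kh|\log h|)$, proves the cost and decay bullets by the identical volume count and coefficient-tail estimate from Theorem \ref{main}, and obtains $L_p$ stability by perturbing the assumed stability of $(\chi_\xi)$ with a uniform-in-$p$ bound on the error operator $\bfa\mapsto\sum_\xi a_\xi(b_\xi-\chi_\xi)$. The only difference is cosmetic: the paper produces that bound by crude counting at the endpoints ($\ell_\infty\to L_\infty$ in (\ref{ell_infty}) and $\ell_1\to L_1$ in (\ref{ell_one})) followed by Riesz--Thorin interpolation of the finite-rank error operator, which is exactly what your tilted geometric-mean bound combined with the Schur/generalized-Young test accomplishes.
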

\begin{proof}
It remains to demonstrate the $L_p$ stability of $(b_{\xi} ) = (\tilde{\chi}_{\xi})$ for $1\le p<\infty$. 

When $p=1$, we 
consider  a sequence $\bfa = (a_{\xi})_{\xi\in\Xi}\in \ell_{1}(\Xi)$
and 
set $\tilde{s} := \sum a_{\xi} \tilde{\chi}_{\xi}$.
H{\" o}lder's inequality gives
\begin{equation}\label{ell_one}
\|\tilde{s}  - s \|_{L_1(\M)} \le C \|\bfa\|_{\ell_1(\Xi)}\mathrm{vol}({\M})h^{K\nu -2m}
\end{equation}
since for each $x$ we have the  
estimate
$|\tilde{s}(x)  - s(x)| \le  \left( \sum_{\xi\in\Xi}|a_{\xi}|  \right) \max_{\xi\in\Xi}|\chi_{\xi}(x) - \tilde{\chi}_{\xi}(x)|$. 
Interpolating between (\ref{ell_one}) and (\ref{ell_infty}) (i.e.,  interpolating
the finite rank operator $\bfa \mapsto (s - \tilde{s})$)
gives
$$\|s - \tilde{s}\|_{L_p(\M)} \le C h^{K\nu - 2m -d(1-1/p) }\|\bfa\|_{\ell_p(\Xi)}.$$

Therefore,
$$\|s\|_{L_p(\M)} - C h^{K\nu - 2m-d(1-1/p)} \|\bfa\|_{\ell_p(\Xi)} 
\le \|\tilde{s}\|_{L_p(\M)}
\le
\|s\|_{L_1(\M)} + C h^{K\nu - 2m-d(1-1/p)} \|\bfa\|_{\ell_p(\Xi)} $$
and we have 
$$c_1q^{d/p}\|\bfa\|_{\ell_p(\Xi)} (1- C h^{K\nu - 2m-d} )
\le \|\tilde{s}\|_{L_p(\M)}
\le
c_2q^{d/p}\|a\|_{\ell_p(\Xi)}(1+ C h^{K\nu - 2m-d}).$$
\end{proof}
\begin{note} For a
general kernel $k_m$ of polyharmonic and related type 
(where $\L_m\Pi_{\J} \ne \{0\}$), the estimate for the 
decay of Lagrange function coefficients $A_{\xi,\zeta}$ 
is too slowly to guarantee stability of the truncated ``basis''.
In this case, we can guarantee only that tail of the coefficients
is uniformly bounded, $|A_{\xi,\zeta}| \le C h^d$, and the
best estimate we can give to a truncated Lagrange function $\tilde{\chi}_{\xi}$
is $|\chi_{\xi}(x) - \tilde{\chi}_{\xi}(x)|\le C$.
\end{note}


\section{Numerical examples}\label{S:Examples}
In this section we give some numerical illustrations of the previous results.  In the first example, we provide results for
restricted surface splines on $\mathbb{S}^2$ that support Theorem \ref{main}.  In particular, we demonstrate that the 
constants $C$ and $\nu$, which govern the rate of decay, are in fact quite reasonable.  In the second 
example, we illustrate how the results from Section \ref{S:better_basis} can be used for practical computations 
of surface spline interpolants on $\mathbb{S}^2$ that involve large point sets.  Finally, in the last example, we
investigate the decay rate of the Lagrange coefficients for the restricted surface spline to the Torus, a manifold not covered
by the present theory.

\begin{example}\label{first} 
 \begin{figure}[ht]
\centering
\includegraphics[height=90mm]{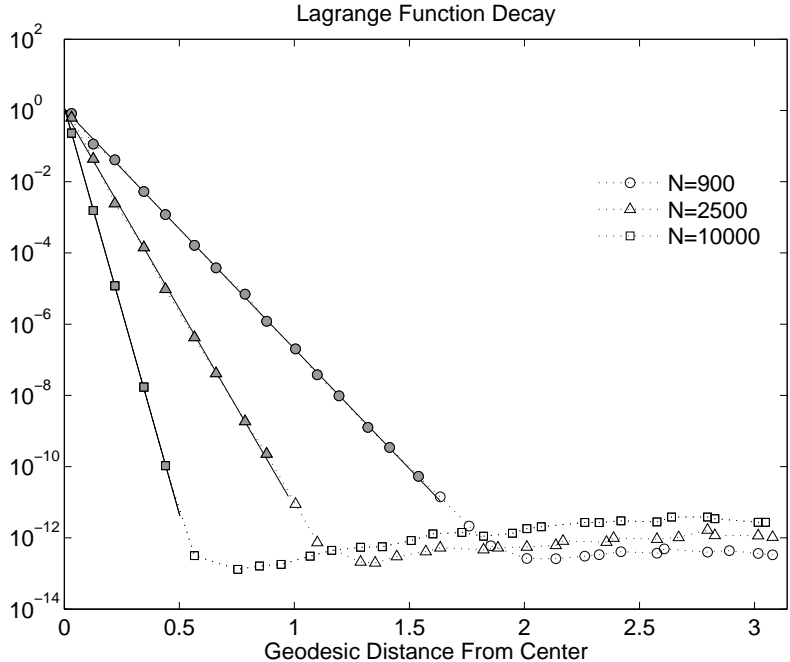}
\caption{Maximum latitudinal 
values of the Lagrange function for 
the kernel $k_2(x,\alpha) = (1-x\cdot\alpha)\log(1-x\cdot\alpha)$. 
This experiment was carried out in double precision arithmetic and
the plateau at roughly $10^{-11}$ occurs due to ill conditioning of
the collocation matrices and truncation error.
}\label{ldTPS}
\end{figure}

We demonstrate the decay of 
Lagrange functions and their coefficients for the second order restricted
surface spline (also known as the thin plate spline) 
$k_2(x,\alpha)  = (1-x\cdot\alpha)\log(1-x\cdot \alpha)$.  The interpolant takes the form $\chi_{\xi} = \sum_{\zeta\in\Xi} A_{\xi,\zeta}k(\cdot,\zeta) +p_{\xi}$, where $p_{\xi}$ is a degree $1$ spherical harmonic.  In this example, we use the ``minimal energy points'' of Womersley for the sphere -- these are described and distributed at the website  \cite{Wom}. The value
of these point sets is as benchmarks. Each set of centers has a nearly identical mesh ratio. Furthermore,
the important geometric properties (e.g., fill distance and separation distance) are explicitly documented. Their potential theoretic properties and their importance in constructing quadrature rules and spherical designs, 
which are discussed in \cite{Saff, SlWom}, are not pertinent to this work.
Because of the nice geometric properties of the minimal energy point sets, it is sufficient to consider the Lagrange function $\chi_{\xi}$ centered at the north pole $\xi = (0,0,1)$. 

\begin{figure}[h]
\centering
\includegraphics[height=90mm]{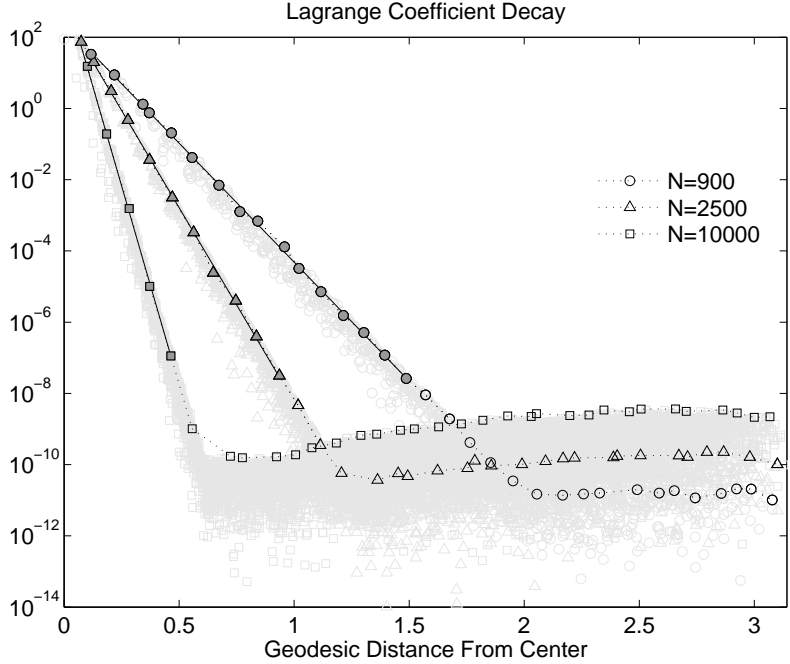}
\caption{Plot of coefficients for a Lagrange function 
in the kernel space $S(k_2,\Xi)$.
This experiment was carried out in double precision
arithmetic.
}\label{lcTPS}
\end{figure}

Figure \ref{ldTPS}  displays the maximal
latitudinal values\footnote{The function $\chi_{\xi}$
is evaluated on a set of points $(\phi,\theta)$ with $n_0$ equispaced latitudes $\phi\in [0,\pi]$
and $n_1$ equispaced longitudes $\theta\in [0,2\pi]$}
 of $\log_{10}|\chi_{\xi}|$.  We clearly observe the exponential decay of the Lagrange function\footnote{At least until a 
terminal value of roughly $10^{-11}$, at which point there is a plateau beyond which the
values no longer decay -- see  below and Figure \ref{lcFREE_TPS} for an explanation of this.} 
$$|\chi_{\xi}(x)|\le C_L \exp\left(-\nu_L \frac{d(x,\xi)}{h}\right)$$
guaranteed by  \cite[Theorem 5.3]{HNW2}.
From this figure, the value of $\nu_L$, which measures the rate of exponential
decay is observed to be close to $1.35$.


We can visualize the decay of the  corresponding coefficients in the same way. We again
take the Lagrange function centered at the north pole:
for each $\zeta' \in\Xi$, the coefficient $|A_{\xi, \zeta'}|$ of the kernel $k(\cdot,\zeta')$ in the expansion
$\chi_{\xi} = \sum A_{\xi,\zeta}k(\cdot,\zeta)+ p_{\xi}$ is
plotted with horizontal coordinate $\sin(\zeta')$.  The results for sets of centers of size 
$N= 900, 2500 $ and $10000$ are given in Figure 2. The  exponential decay
$$|A_{\zeta,\xi} |\le C_c q^{d-2m} \exp{\left(-\nu_c \frac{\d(\xi,\zeta)}{h}\right)}.$$
 guaranteed by Theorem \ref{main} is clearly in force, and we
can estimate the constants $\nu_c$ and $C_c$ for the decay of the coefficients 
following the method used for the Lagrange functions themselves -- we note that coefficients are shifted vertically, which is a consequence of the factor of $q^{d-2m} = q^{-2}$ in the estimate (\ref{Coeff}).  Table \ref{lagrange_stats}   gives more results with some added detail, including  estimates of the constants $C_L$ and $C_c$.

\begin{table}[ht]\label{lagrange_stats}
\begin{center}
\begin{tabular}{|c||c|c|c|c|c|c|}
\hline
$N$ & $h_X$ & $\rho_X$ & $\nu_L$ & $C_L$ & $\nu_c$ & $C_c$\\ 
\hline
\hline
400 & 0.1136 & 1.2930 & 1.1119 & 0.8382  & 1.0997 & 69.9891\\
\hline 
900 & 0.0874  & 1.5302 & 1.3556 & 1.0982& 1.3445 & 231.5573 \\ 
\hline
1600 & 0.0656 & 1.5333 & 1.3513 & 1.2170 & 1.3216 & 324.8534 \\
\hline
2500 & 0.0522 & 1.5278 & 1.3345 & 0.9618& 1.3117 & 470.6483 \\
\hline
5041 & 0.0365 & 1.5304 & 1.3395 & 1.1080  & 1.3158 & 1087.8\\
\hline 
10000 & 0.0260 & 1.5421 & 1.3645 & 1.1934 & 1.3369 & 2564.9\\
\hline
\end{tabular}
\caption{Estimates of fill distance $h$, mesh ratio $\rho$ for
some minimum energy point sets on the sphere 
and $\nu$ and $C$ values for the kernel $k_2(x,\alpha) = (1-x\cdot\alpha)\log(1-x\cdot\alpha)$.}
\end{center}
\end{table}

The perceived plateau present in the Lagrange function values as well as the coefficients
shown in Figures \ref{ldTPS} and \ref{lcTPS} is due purely to round-off error related to the conditioning of
kernel collocation and evaluation matrices.  These results were produced using double-precision (approximately 16 digits) floating point arithmetic.   To illustrate this point, we plot the decay rate of the Lagrange coefficients for the 900 and 1600 point node sets as computed using high-precision (40 digits) floating point arithmetic in Figure \ref{lcFREE_TPS}.  The figure clearly shows the exponential decay does not plateau and continues as the theory predicts.

\begin{figure}[h]
\centering
\includegraphics[height=90mm]{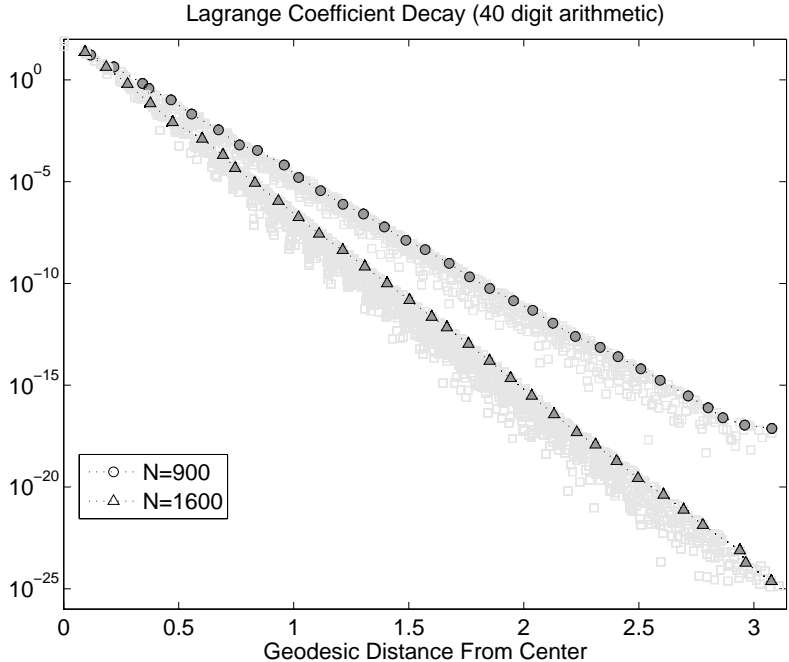}
\caption{Plot of coefficients for a Lagrange function 
in the kernel space $S(k_2,\Xi)$. 
This experiment was carried out in Maple with 40 digit arithmetic.
}\label{lcFREE_TPS}
\end{figure}

\end{example}
\begin{example}
 In this example we construct a basis $(\achi_{\xi})_{\xi\in\Xi}$ for the kernel space $S(k_2,\Xi)\subset C(\sph^2)$ by using 
 $\mathcal{O}\bigl((\log N)^2\bigr)$ centers to construct each $\achi_{\xi}$. 
 
 With this basis, 
 we use an equivalent representation in the form
\begin{equation}
I_{\Xi} f = \sum_{\xi\in \Xi}  c_{\xi}\achi_{\xi}(\cdot), \label{eq:good_basis}
\end{equation}
where each $\achi_{\xi}$ is a local Lagrange function about the node $\xi$ formed by $M \ll N$ basis elements of $S(k_2,\Xi)$.  Specifically, let $\Upsilon(\xi) \subset \Xi$ such that $\xi \in \Upsilon(\xi)$, $\#\Upsilon(\xi)=M$, and $\bigcup_{\xi \in \Xi} \Upsilon(\xi) = \Xi$, then 
\begin{align}
\achi_{\xi} = \sum_{\zeta \in \Upsilon(\xi)} a_{\xi,\zeta} k(\cdot,\zeta) + \sum_{j=1}^4 b_{\xi,j} \varphi_j. \label{eq:approx_lagrange}
\end{align}
The coefficients $a_{\xi,\zeta}$ and $b_{\xi,j}$ are determined from the conditions
\begin{align*}
\achi_{\xi}(\zeta) = 
\begin{cases}
1 & \text{if $\zeta=\xi$}, \\
0 & \text{if $\zeta \in \Upsilon(\xi)\setminus\xi$},
\end{cases}
\quad\text{and}\quad
\sum_{\zeta\in\Upsilon(\xi)} a_{\xi,\zeta} \varphi_j(\zeta)=0.
\end{align*}

The linear system for determining the interpolation coefficients $c_{\xi}$ in \eqref{eq:good_basis} can be written as:
\begin{equation}
\begin{bmatrix}
\coll & \Phi
\end{bmatrix}
\begin{bmatrix}
A_{\Upsilon} \\
B_{\Upsilon}
\end{bmatrix}
\begin{bmatrix}
\vect{c}
\end{bmatrix}
=
\begin{bmatrix}
\vect{f}
\end{bmatrix},
\label{eq:good_lin_sys}
\end{equation}
where $(\coll)_{i,j} = k_2(\xi_i,\xi_j)$ and $B_{i,j} = \phi_j(\xi_i)$, $i,j=1,\ldots,N$. The matrix $A_{\Upsilon}$ is a $N$-by-$N$ \emph{sparse} matrix where each column contains $M$ entries corresponding to the values of $a_{\xi,\zeta}$ in \eqref{eq:approx_lagrange}.  The matrix $B_{\Upsilon}$ is a $4$-by-$N$ matrix with each column containing the values of $b_{\xi,j}$ in \eqref{eq:approx_lagrange}.   With the linear system written in this way, one can view the matrix $[A_{\Upsilon}\; B_{\Upsilon}]^{T}$ as a right preconditioner for the standard kernel interpolation matrix.

If the sets $\Upsilon(\xi)$ are chosen appropriately then the linear system \eqref{eq:good_lin_sys} should be ``numerically nice'' in the sense that the matrix $\coll A_{\Upsilon} + \Phi B_{\Upsilon}$ should have decaying elements from its diagonal and should be well conditioned.  In the example below, each $\Upsilon(\xi)$ is chosen as the $M-1$ nearest nodes to $\xi$.  
Section \ref{S:better_basis} suggests taking $M= \mathcal{O}\bigl((\log N)^2\bigr)$.
Through trial and error we found that choosing $M=7\lceil (\log_{10} N)^2) \rceil$ gave very good results over several decades of $N$.  Each set $\Upsilon(\xi)$ can be determined $O(\log N)$ operations by using a KD-tree algorithm for sorting and searching through the nodes $\Xi$.  The cost for constructing the KD-tree is $O(N (\log N)^2)$.  Thus, constructing all the sets $\Upsilon$ takes
$O(N (\log N)^2)$ operations. 

To solve this linear system we will use the generalized minimum residual method (GMRES)~\cite{SaadSchultz}.  This is a Krylov subspace method which is applicable to non-symmetric linear systems and only requires computing matrix-vector products.   Ideally, there should be a method for computing these matrix vector products in $O(N)$ or $O(N \log N)$ operations to make GMRES more efficient.  Keiner \emph{et. al.} have shown that this can be done in the case of the kernel matrix $K_{\Xi}$ using fast algorithms for spherical Fourier transforms~\cite{Keiner:2006:FSR:1152729.1152732}.  In the results that follow, we have not used this algorithm, but have instead just computed the matrix vector products directly.  We will investigate the use of these fast algorithms in a follow up study.

For the numerical tests we use icosahedral node sets $\Xi\subset\mathbb{S}^2$ of increasing cardinality.  These were chosen because of their popularity in atmospheric fluid dynamics (see, for example,~\cite{Giraldo:1997,StuhnePeltier:1999,Ringler:2000GeodesicGrids,Majewski:2002GME}) where interpolation between node sets is often required.  The values of $f$ were chosen to take on random values from a uniform distribution between $[-1,1]$.  Table \ref{tbl:iterations} displays the number of GMRES iterations to compute an approximate solution to the resulting linear systems \eqref{eq:good_lin_sys}.  As we can see, the number of iterations stays relatively constant as $N$ increases and does not appear to increase with $N$.  

\begin{table}[h]
\centering
\begin{tabular}{|c|c||c|c|}
\hline
& & \multicolumn{2}{c|}{Number GMRES iterations} \\
$N$ & $m$ &  $tol=10^{-6}$ & $tol=10^{-8}$ \\
\hline
\hline
2562 & 84 & 7 & 5 \\
10242 & 119 & 5 & 7 \\
23042 & 140 & 6 & 7 \\
40962 & 154 & 5 & 7 \\
92162 & 175 & 6 & 8 \\
163842 & 196 & 5 & 7\\
\hline
\end{tabular}
\caption{Number of GMRES iterations required for computing an approximate solution to \eqref{eq:good_lin_sys} using  icosahedral node sets of cardinality $N$.  $m$ corresponds to the number of nodes used to construct the local basis and $tol$ refers to the tolerance on the relative residual in the GMRES method.  The right hand side was set to random values uniformly distributed between $[-1,1]$ and the initial guess for GMRES was set equal to the function values.\label{tbl:iterations}}
\end{table}
\end{example}

\begin{example}
A second example shows similar results for Lagrange functions for the kernel
$k(x,\alpha) = (x - \alpha)^2 \log (x - \alpha)$
restricted to a torus of outer radius $4$ and inner radius $2$.
In other words,  the surface parametrized by
\begin{eqnarray*}
x&=&(3+\cos v)\cos u\\
y&=&(3+\cos v)\sin u\\
z&=&\sin v,
\end{eqnarray*}
with $u,v\in [0,2\pi]$.
This combination of kernel and  manifold is not treated in \cite{HNW2} (the curved
torus is not even a symmetric space) although it is considered in \cite{FuWr},
as a subset of $\reals^3$ that is $\Pi_1$ unisolvent. Indeed the torus is the zero
set of a degree $4$ polynomial in $\reals^d$, a sufficiently dense subset will also be
$\Pi_1$ unisolvent. Hence, interpolation on such sets by shifts of $k$ is well posed.

For this experiment, we consider ``minimum energy'' point sets $\Xi$ 
produced by
Ayla Gafni, Doug Hardin and Ed Saff which have previously been used in 
\cite{FuWr}. In
each case we fix the point $\xi = (4,0,0)$ and 
we examine coefficients $(A_{\xi,\zeta})_{\zeta\in\Xi}$ of the Lagrange function 
$\chi_{\xi} = \sum_{\zeta \in\Xi} A_{\xi,\zeta} k(\cdot,\zeta) +p$.
  To solve the interpolation problem, we add a linear polynomial, $p\in \Pi_1(\reals^3)$, 
  and require  the coefficients to satisfy the
side conditions
 $\sum_{\zeta\in\Xi} A_{\xi,\zeta} q(\zeta)=0,\, \forall q\in\Pi_1(\reals^3)$ -- in other words, the
 coefficients annihilate 
linear polynomials.

Despite the fact that Theorem \ref{main} does not apply, a certain exponential decay is observed for these coefficients as well. As before, the rate of decay seems to be independent of $h$  as well as $N= \#\Xi$.

We provide two ways to visualize the decay of the coefficients: first by arranging them latitudinally with horizontal axis representing the distance from $\xi$ in the $u$ direction and then longitudinally with horizontal axis  representing the distance from $\xi$ in the $u$ direction. For the specific choice of $\xi=(4,0,0)$, both correspond to geodesic distances. (Other choices of $\xi$ are observed to have the same rate of coefficient decay.) 

\begin{figure}[h]
\centering
\begin{tabular}{cc}
\includegraphics[height=60mm]{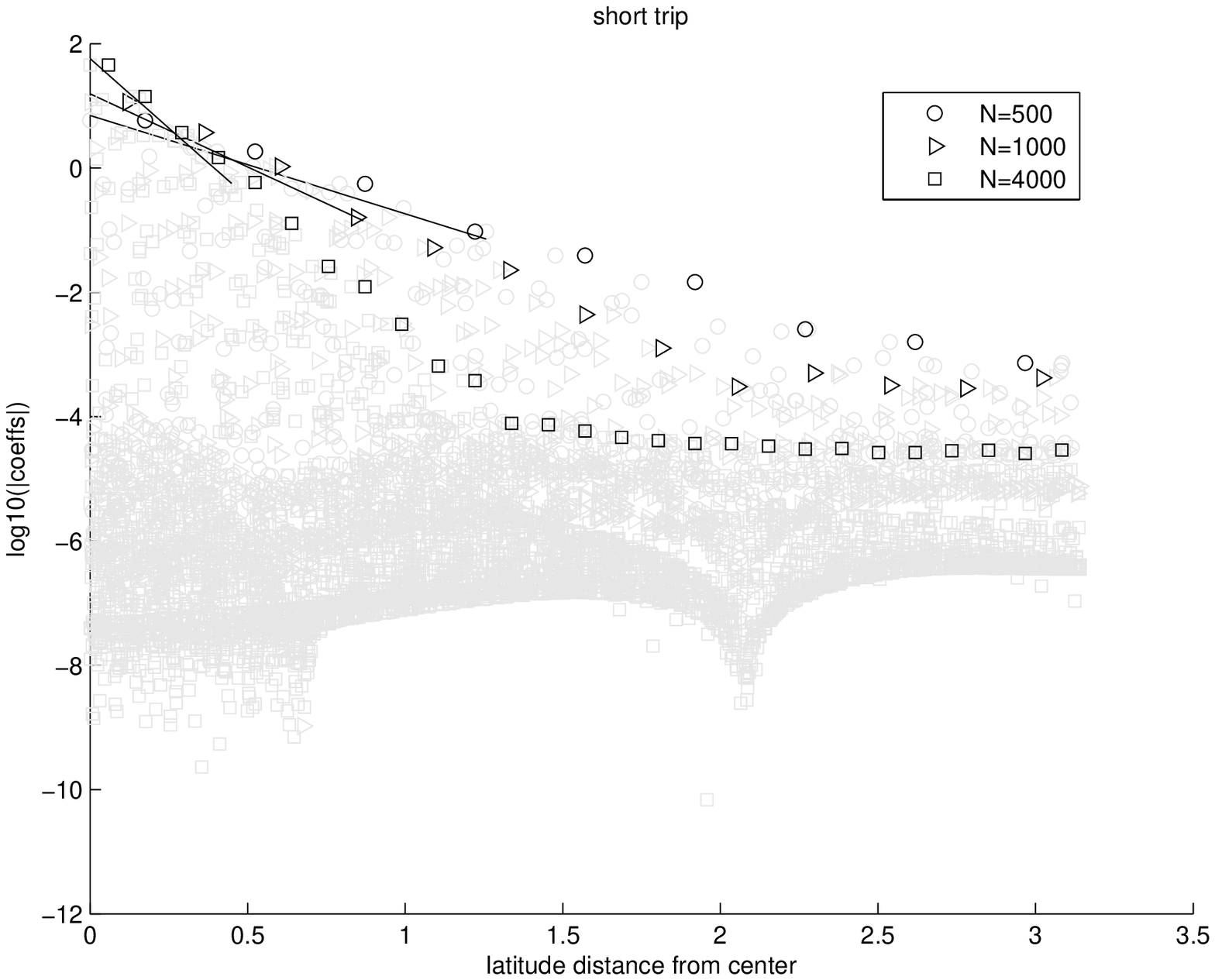}&
\includegraphics[height=60mm]{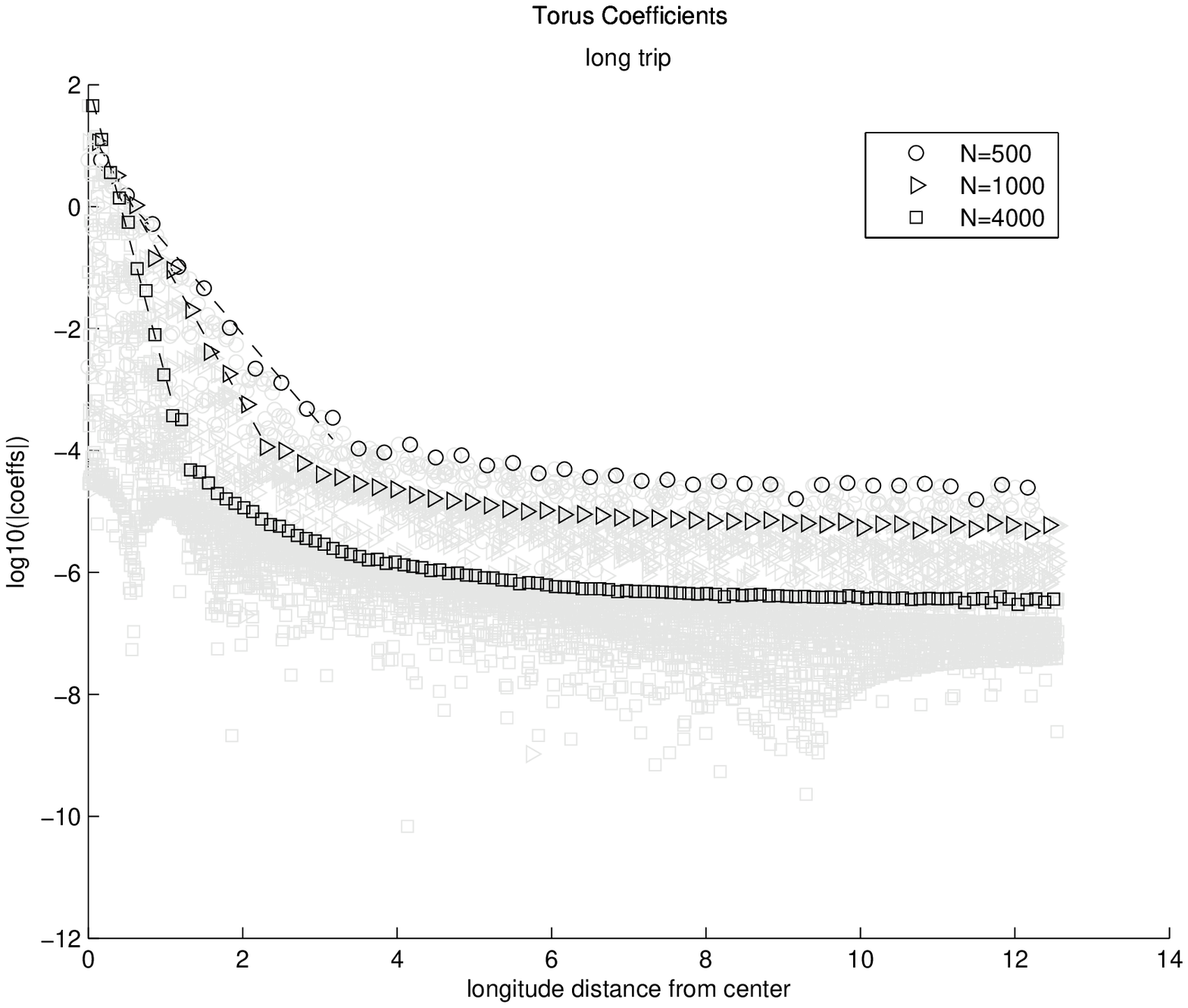}\\
\end{tabular}
\caption{Log plot of  coefficients (in absolute value) of Lagrange functions centered at $(4,0,0)$
for the kernel $k(x,\alpha) = |x-\alpha|^2\log|x-\alpha|$  using minimal energy 
sets on the torus of cardinality on 500, 1000 and 4000.
}\label{Torus}
\end{figure}

A more complete account of this experiment is given in Table \ref{tbl:torus}, where estimates 
of the exponential  decay rates  $\nu_{Lat}$ and $\nu_{Long}$ (in the $v$ and $u$ directions,
respectively) are given. We note that
they are between 1 and $1.3$ in either direction.

\begin{table}[h]
\centering
\begin{tabular}{|c||c|c|c|c|c|c|}
\hline
$N$ & $h_X$ & $\rho_X$ & $\nu_{Lat}$ & $C_{Lat}$ & $\nu_{Long}$ & $C_{Long}$ \\ 
\hline
\hline
500 & 0.3383 & 1.5242 & 1.2312 & 6.9946 & 1.1504 & 7.3231 \\
\hline 
750 & 0.2737  & 1.5024 & 1.1556 & 9.4737 & 1.2122 & 15.4474 \\ 
\hline
1000 & 0.2375 & 1.5014 & 1.2870 & 15.6376 & 1.2401 & 20.4220 \\
\hline
1999 & 0.1639 & 1.4725 & 1.1213& 25.3917& 1.2719 & 49.0776 \\
\hline
3000 & 0.1333 & 1.4479 & 1.0793 &36.1593 & 1.2421 & 58.7687 \\
\hline 
4000 & 0.1151 & 1.4498 & 1.1836 & 57.4460 & 1.2738 & 105.2720\\
\hline
\end{tabular}
\caption{
Results for a Lagrange function experiment on the torus using
the thin plate spline kernel $k(x,\alpha) = |x-\alpha|^2\log|x-\alpha|$.
In addition to fill distance $h$, mesh ratio $\rho$ for minimum energy set of Saff and Hardin, 
estimates of the latitudinal and  longitudinal  exponential decay rate and constant $\nu$ and $C$ values are given.
\label{tbl:torus}}
\end{table}

\end{example}
\bibliographystyle{siam}
\bibliography{Kernel_Coefficients}
\end{document}